\RequirePackage{tikz}
\documentclass[sn-mathphys-num]{sn-jnl}

\usepackage{graphicx}%
\usepackage{multirow}%
\usepackage{amsmath,amssymb,amsfonts}%
\usepackage{amsthm}%
\usepackage{mathrsfs}%
\usepackage[title]{appendix}%
\usepackage{xcolor}%
\usepackage{textcomp}%
\usepackage{manyfoot}%
\usepackage{booktabs}%
\usepackage{algorithm}%
\usepackage{algorithmicx}%
\usepackage{algpseudocode}%
\usepackage{listings}%
\usepackage{todonotes}
\usepackage[utf8]{inputenc}
\usepackage[T1]{fontenc}
\usepackage{geometry}
\usepackage{microtype}
\usepackage{subcaption}
\usepackage{bbm}
\usepackage{pdflscape}
\usepackage{bm}
\usepackage{url}
\usepackage{multirow}
\usepackage{setspace}
\usepackage{mathtools} 
\usepackage{xcolor}
\usepackage{xspace}

\usepackage{booktabs}
\usepackage{csquotes}
\usepackage{soul}
\usepackage{environ} 
\usepackage{trimclip}
\usepackage{forest}
\forestset{
	declare toks={elo}{}, 
	anchors/.style={grow'=90, anchor=#1,child anchor=#1,parent anchor=#1},
	dot/.style={tikz+={\fill (.child anchor) circle[radius=#1];}},
	dot/.default=2pt,
	decision edge label/.style n args=3{
		edge label/.expanded={node[midway,auto=#1,anchor=#2,\forestoption{elo}]{\strut$\unexpanded{#3}$}}
	},
	decision/.style={if n=1
		{decision edge label={left}{east}{#1}}
		{decision edge label={right}\label{key}{west}{#1}}
	},
	decision tree/.style={
		for tree={grow'=90,
			s sep=2.5pt, l=0pt, l sep =0.5pt, outer sep =-1.5pt,
			if n children=0{anchors=west}{
				if n=1{anchors=west}{anchors=west}},
			math content,
		},
		anchors=west, outer sep=-1.5pt,
		dot=2pt, for descendants=dot,
		delay={for descendants={split option={content}{;}{content,decision}}},
	},
	rooted tree/.style={
		for tree={
			grow'=90,
			parent anchor=center,
			child anchor=center,
			s sep=2.5pt,
			l sep =1pt,
			if level=0{
				baseline
			}{},
			delay={
				if content={*}{
					content=,
					append={[]}
				}{}
			}
		},
		before typesetting nodes={
			for tree={
				circle,
				fill,
				minimum width=3pt,
				inner sep=0pt,
				child anchor=center,
			},
		},
		before computing xy={
			for tree={
				l=5pt,
			}
		}
	}
}
\DeclareDocumentCommand\ct{o}{\Forest{decision tree [#1]}}
\DeclareDocumentCommand\rt{o}{\Forest{rooted tree [#1]}}
\usepackage{tikz}
\usepackage{tkz-euclide} 
\usetikzlibrary{patterns}
\newlength{\tickl}    
\tikzset{axes/.style={thick,-latex}}
\tikzset{lineplot/.style={thick}}
\tikzset{arrow/.style={thick,-latex}} 
\tikzset{thick arrow/.style={ultra thick,-latex}} 
\tikzset{grid lines/.style={very thin,gray!30}}	
\tikzset{point/.style={radius=2pt}}
\tikzset{help line/.style={black,thin,dashed}} 

\makeatletter
\newsavebox{\measure@tikzpicture}
\NewEnviron{scaletikzpicturetowidth}[1]{%
	\def\tikz@width{#1}%
	\begin{lrbox}{\measure@tikzpicture}%
		\BODY
	\end{lrbox}%
	\pgfmathparse{#1/\wd\measure@tikzpicture}%
	\BODY
}
\makeatother
\usetikzlibrary{decorations.markings}
\usepackage{pgfplots}

\tikzset{mylabel/.style  args={at #1 #2  with #3}{
		postaction={decorate,
			decoration={
				markings,
				mark= at position #1
				with  \node [#2] {#3};
} } } }

\renewcommand{\b}[1]{\mathbf #1}
\newcommand{\mean}[1]{	\{\!\{ #1 \}\!\}}
\newcommand{\jump}[1]{	[\![ #1 ]\!]}

\newcommand{\ie}{i.\,e.\ }
\newcommand{\eg}{e.\,g.}
\newcommand{\dt}{\Delta t}
\newcommand{\dx}{\Delta x}
\newcommand{\dtdx}{\frac{\dt}{\dx}}

\newcommand{\bsigma}{\boldsymbol{\sigma}}

\renewcommand{\O}{\mathcal O}

\newcommand{\R}{\mathbb R}

\newcommand{\dd}{\mathrm{d}}

\newcommand{\tm}{\subseteq}
\newcommand{\abs}[1]{\lvert #1\rvert}

\newcommand{\tend}{t_\mathrm{end}}

\DeclareMathOperator{\diag}{diag}

\renewcommand{\tt}[2]{\resizebox{#1 cm}{!}{\marginbox*{0pt -0.25\height pt 0pt 0pt}{#2}}}
\newcommand{\bspi}{\boldsymbol{\pi}}

\newcommand{\byk}{\b u^{(k)}}

\definecolor{colorA}{rgb}{0,0.447,0.741}
\definecolor{colorB}{rgb}{0.85,0.325,0.098}
\definecolor{colorE}{rgb}{0.929,0.694,0.125}
\definecolor{colorF}{rgb}{0.494,0.184,0.56}
\definecolor{colorD}{rgb}{0.466,0.674,0.188}
\definecolor{colorC}{rgb}{0.301,0.745,0.933}
\definecolor{colorG}{rgb}{0.635,0.078,0.184}

\theoremstyle{thmstyleone}%
\newtheorem{theorem}{Theorem}
\newtheorem{lemma}[theorem]{Lemma}
\newtheorem{proposition}[theorem]{Proposition}%

\theoremstyle{thmstyletwo}%
\newtheorem{remark}{Remark}%

\theoremstyle{thmstylethree}%
\newtheorem{definition}{Definition}%

\begin{document}

\title[Short Title]{Flux-Balanced Patankar-type Schemes for the Compressible Euler Equations}


\author*[1,2]{\fnm{Thomas} \sur{Izgin}}\email{izgin@mathematik.uni-kassel.de,  thomas\_izgin@brown.edu, ORCID:~0000-0003-3235-210X}

\author[1]{\fnm{Andreas} \sur{Meister}}\email{meister@mathematik.uni-kassel.de}
%
\author[2]{\fnm{Chi-Wang} \sur{Shu}}\email{chi-wang\_shu@brown.edu, ORCID:~0000-0001-7720-9564}
\author[3]{\fnm{Davide} \sur{Torlo}}\email{davide.torlo@uniroma1.it, ORCID:~0000-0001-5106-1629}

\affil*[1]{\orgdiv{Institute of Mathematics}, \orgname{University of Kassel}, \orgaddress{\street{Heinrich-Plett-Str. 40}, \city{Kassel}, \postcode{34132}, \state{Hessen}, \country{Germany}}}

\affil[2]{\orgdiv{Division of Applied Mathematics}, \orgname{Brown University}, \orgaddress{\street{182 George Street}, \city{Providence}, \postcode{02906}, \state{Rhode Island}, \country{USA}}}

\affil[3]{\orgdiv{Department of Mathematics}, \orgname{Università di Roma La Sapienza}, \orgaddress{\street{Piazzale Aldo Moro 5}, \city{Rome}, \postcode{00185 }, \country{Italy}}}



\abstract{Positivity preservation of key physical quantities in the context of fluid flows, such as density and internal energy, is an essential property of a numerical scheme as otherwise the solution lacks physical relevance and has a not well-defined equation of state. One time integration technique that is capable of preserving the positivity of quantities for every time step size is the Patankar-trick and its variants. However, in the context of the Euler equations of gas dynamics, we wonder whether the Patankar-trick should be applied to the density and total energy equations or only to one of them.
In this work, we discuss one drawback of the schemes when blindly applied to every positive conserved variable and additionally point out how to overcome the issue by balancing the involved numerical fluxes correctly. 
To illustrate our findings, we investigate modified Patankar--Runge--Kutta (MPRK) schemes in the context of the compressible Euler equations with and without stiff  source terms. 
We discover that it is beneficial to only apply the Patankar-trick in the density equation and to balance the remaining numerical fluxes consistently rather than applying the trick also to the energy equation. 
This leads also to the preservation of contact discontinuities. 
We perform numerical experiments to demonstrate that the accuracy of the methods is maintained while the performance of our approach is superior to the traditional application of MPRK schemes.}


\keywords{Modified Patankar--Runge--Kutta schemes, Unconditional positivity, Compressible Euler equations}


\pacs[MSC Classification]{65M06, 65M08, 65M20,65M22}

\maketitle

\section{Introduction}
For many natural phenomena, including the flow of fluids such as gases and water, analytic solutions are rarely derivable, and hence, numerical methods are essential for approximating solutions. However, these methods must preserve critical physical properties -- positivity (\eg, of density and pressure), conservation, and entropy stability -- to ensure both physical relevance and numerical robustness. 
In particular, failing to maintain positivity (\eg, negative mass or pressure) yields nonphysical solutions, as well as meaningless equations of state, rendering the simulation invalid \cite{Sandu2001,SSPMPRK2}. 
Moreover, neglecting conservation properties also result in physically incorrect behavior of the numerical approximation \cite{lax_systems_1960}. Often, this requires to evolve conserved quantities, leaving other variables, like pressure in the compressible Euler equations, to be derived using an equation of state (EoS). This makes their control especially challenging during the numerical approximation.
There are existing and ongoing works concerning numerical positivity, \eg, based on Rusanov-like fluxes for the Euler equations \cite{PS1996,CGMPT2023}, by using subcell limiters \cite{RPG2022,GD2021}, bound preserving filters \cite{Dzanic2024a,Dzanic2024b}, geometric projection \cite{WS2023}, or modified time stepping schemes \cite{EG2022,EG2023}.  However, none of these approaches yields an unconditionally positive method, meaning that there are potentially severe restrictions on the time step size to ensure positivity. 
To guarantee positivity unconditionally while achieving high accuracy, numerical schemes must be nonlinear \cite{sandu2001positive}, \ie the iterates are described by a nonlinear function even if the PDE is linear. 
Overall, several strategies have been developed to address the challenges related to unconditional positivity-preservation:
\begin{enumerate}
	\item \textit{Clipping techniques}, which forcibly set negative values to zero, either result in a mass-shifting optimization problem or otherwise compromise conservation, and, up-to-date, lack a proof of stability \cite{BIM2022}.
	\item \textit{Fully implicit, nonlinear methods} \cite{HR2020,ricchiuto2011habilitation} can enforce positivity but require costly iterative solvers, which may fail to converge (to a positive solution), and thus, still produce nonphysical results.
	\item	\textit{Patankar-type methods} represent a family of explicit or linearly implicit yet nonlinear schemes, which are unconditionally positive \cite{Patankar1980,BDM2003,MCD2020,KM18,AKM2020}.   
\end{enumerate}
The main idea behind Patankar-type schemes is to modify an existing time-stepping method by introducing nonlinear weights in such a way that the resulting numerical scheme becomes unconditionally positive. 
The primary challenge lies in designing these weights so that the modified scheme preserves the accuracy of the original (baseline) method. 
This nonlinear modification is achieved using the so-called \emph{Patankar-trick} \cite{Patankar1980}, which gives this family of methods its name.
A notable example is the incorporation of modified Patankar (MP) weights into classical Runge–Kutta (RK) schemes, leading to the development of modified Patankar–Runge–Kutta (MPRK) methods \cite{BDM2003,KM2018b}, which in addition to being unconditionally positive, are also conservative. 
Motivated by their strong numerical performance, the Patankar-trick has since been successfully extended to a variety of time integration frameworks, including strong-stability-preserving Runge–Kutta (SSPRK) methods \cite{SSPMPRK2,SSPMPRK3}, arbitrary high-order Deferred Correction (DeC) schemes \cite{MPDeC}, and linear multistep methods \cite{IMPV2025}. 
The resulting modified schemes all belong to the broader Patankar-type family, which themselves can be recast as non-standard additive Runge--Kutta (NSARK) methods, see \cite{NSARK, IzginThesis}.
Despite their advantages, analytical results for Patankar-type schemes have been difficult to derive due to their nonlinear structure. 
It is only recently, in the PhD thesis \cite{IzginThesis}, that rigorous results on Lyapunov stability \cite{IKM2022b} and high-order accuracy conditions \cite{NSARK} were developed. 
It is worth mentioning that this Lyapunov stability investigation is the first natural step when searching for reasonable well-balanced \cite{Ciallella2022} schemes as it ensures that initial errors around a constant steady state do not accumulate during the course of the simulation. Indeed, it is the only nonlinear stability analysis currently available for these nonlinear schemes.\newline
Despite progress, controlling the positivity of quantities defined via the EoS (like pressure or internal energy) remains a challenging task. Existing workarounds (e.g., taking absolute values or modifying parameters post hoc) lack reliability and physical grounding \cite{SSPMPRK2}. Using algebraic manipulations \cite{kuzmin2005algebraic} of the fluxes is a viable solution, but it requires an intrusive modification of existing codes. 
Further approaches consider giving up strict conservation of certain quantities (\eg, momentum or total energy) in favor of evolving directly primitive variables such as pressure and may lead to easier control on such variables, making use of some correction terms where conservation is needed \cite{abgrall2018high,gaburro2024discontinuous}.

In this work, we aim at making progress towards the design of unconditionally positive schemes for balance laws when conservative variables are evolved. In particular, we propose to apply the MP trick only to the density equation and to balance the remaining numerical fluxes consistently. This has the first effect of preserving contact discontinuities with constant pressure and velocity, which is a desirable property for the Euler equations. Moreover, we notice that during the simulations, it allows to maintain positivity of the pressure for larger values of the CFL with respect to classical MP methods, outperforming explicit methods and the traditional application of MPRK schemes.

The paper is organized as follows. We recall the compressible Euler equations with a single as well as with three reacting species in Section~\ref{sec:tests} and present the classical MPRK schemes in Section~\ref{sec:MPRK}. Then, we discuss how to apply the MPRK schemes in the presence of stiff source terms in Section~\ref{sec:schemes}, 
proposing a novel flux-balanced version of the momentum and total energy fluxes to guarantee the preservation of contact discontinuities and to improve the performance of the method with respect to classical MPRK schemes.
Finally, we perform numerical experiments in Section~\ref{sec:experiments} and summarize our work in Section~\ref{sec:summary}.

\section{Governing Equations}\label{sec:tests} 
In this work, we consider the compressible Euler equations of gas dynamics to describe the physical quantities of a single fluid, or of multi-species gas with chemical reactions. 
The Euler equations describe the evolution of density $\rho$ (or densities $\rho_i$), the momentum $\rho u$ and the total energy $\rho E$ of a gas. It is of paramount importance to preserve some physical properties of the gas dynamics such as positivity of the density and internal energy, and preservation of equilibrium states with constant velocity and pressure.  


\subsection{Euler equations}
The Euler equations and their reactive versions in one dimension can be written as hyperbolic balance laws of the type
\begin{equation}\label{eq:EulerEquations}
	\partial_t \b U(x,t)+ \partial_{x} \b F(\b U(x,t))  
	=\b S(\b U(x,t)), \qquad x \in \Omega \subset \mathbb R,\, t \in \mathbb R^+,
\end{equation}
where $\b U:\mathbb R \times \mathbb R^+ \to \mathbb R^d$ is the vector of the evolved quantities (when $\b S\equiv\b 0$ these are conserved quantities), $\b F:\mathbb R^d\to \mathbb R^d$ is the flux and $\b S:\mathbb R^d\to \mathbb R^d$ the source term.
The compressible Euler equations for a one species gas in a one dimensional domain is defined by 
\begin{equation}\label{eq:monoEuler}
	\b U=(\rho,\rho u,\rho E)^T,\, \b F(\b U)=(\rho u, \rho u^2+p,u(\rho E+p))^T \text{ and } \b S(\b U)  = (0,0,0)^T
\end{equation}
together with the equation of state (EoS) for ideal gas, \ie 
\[p= (\gamma-1)(\rho E - \tfrac{1}{2}\rho v^2),\]
where we use $\gamma= 1.4$ for air.

\subsection{Multi-Species Reactive Euler equations}
The three species reactive Euler equations, where two monoatomic gasses react with each other and there is a third diatomic gas, are defined \cite{WSYS2009,SSPMPRK2} by \eqref{eq:EulerEquations} with
\begin{align}\label{eq:multiEuler}
	&\b U=(
	\rho_1, \rho_2, \rho_3,	\rho u,	\rho E)^T,\, \b F(\b U)=(
	\rho_1u, \rho_2u, \rho_3u,
	\rho u^2+p,
	(\rho E+p)u)^T, \\
	\label{eq:source}
	&\b S(\b U)=\delta (2M_1\omega(\b U),-M_2\omega(\b U),0,0,0)^T, \,\text{with } \rho =\sum_{i=1}^3\rho_i,
\end{align}
where
\begin{align*}
	\omega(\b U) & =\left(k_f(T)\frac{\rho_2}{M_2}-k_b(T)\left(\frac{\rho_1}{M_1}\right)^2\right)\sum_{s=1}^3\frac{\rho_s}{M_s}, \\
	T&=\frac{p}{R\sum_{s=1}^3\frac{\rho_s}{M_s}}, & k_f(T)&=\frac{C}{T^2}\exp\left(-\frac{\hat E}{T}\right), \\ k_b(T)&=\frac{k_f(T)}{\exp(b_1+b_2\ln(z)+b_3z+b_4z^2+b_5z^3)},& z&=\frac{10^4}{T}.
\end{align*}
Here, we have defined the inverse of the reaction characteristic time $\delta$ and we will make it vary during the simulations to test very severe stiffness in the source for large $\delta$.
The involved constants are
\begin{align*}
	M_1 &=0.016,&	M_2 &=0.032,&  M_3&=0.028 &R&=287,\\ C&=2.9\cdot10^{17},& \hat E&=59750,& b_1&=2.85, &b_2&=0.988,\\ b_3&=-6.181,&
	b_4&= -0.023,& b_5&= -0.001,&h^0_1 &= 1.558\cdot 10^7. && &
\end{align*}
The equation of state (EoS)	reads
\begin{equation}\label{eq:EOS}
	p= \frac{\sum_{s=1}^3\frac{\rho_s}{M_s}}{\tfrac32 \frac{\rho_1}{M_1} +\tfrac52 \frac{\rho_2}{M_2} + \tfrac32 \frac{\rho_3}{M_3}}\left(\rho E - \rho_1 h_1^0 - \frac{m^2}{2\rho}\right)
\end{equation}
and the speed of sound, $c$, is given by
\[c(\b U) = \sqrt{\frac{\gamma p}{\rho}},\quad \gamma= 1 +\frac{p}{T\sum_{s=1}^3\rho_se_s'(T)}, \quad e_s(T)=\begin{cases}
	\tfrac32 \frac{RT\rho_s}{M_s}, & s=1,2,\\
	\tfrac52 \frac{RT\rho_s}{M_s}, & s=3.
\end{cases}\]
It is trivial to see that the total mass is conserved also by the source term as $\b S_1(\b U)= \delta 2M_1 \omega(\b U) = -\b S_2(\b U) = \delta M_2 \omega (\b U) $.

\subsection{Properties of Euler equations}
In both systems, the preservation of the positivity of the densities is fundamental not only to guarantee the physical relevance of the solution but also to ensure the well-posedness of the problem. In particular, the positivity of the density is required to guarantee that the speed of sound is real-valued and that the system is hyperbolic. 
Moreover, it is also crucial to preserve the positivity of the pressure in order to compute the speed of sound and ensure that the EoS is well-defined. 

Additionally, in absence of reactions, both systems admit a family of solutions with constant velocity and pressure, which are important to preserve in order to avoid spurious oscillations around these equilibria.
Indeed, when $u(x,t)=\bar u$ and $p(x,t)=\bar p$ for some constants $\bar u,\bar p>0$, the fluxes in \eqref{eq:monoEuler} and \eqref{eq:multiEuler} are constant for the variables $\rho u$ and $\rho E$, while the densities are described by transport equations with constant velocity $\bar u$. This is the case with a contact discontinuity solution, described by 
\begin{equation}\label{eq:contact_disc}
	\rho(t,x)=\begin{cases}
		\rho_L, & x<\bar u t,\\
		\rho_R, & x>\bar u t,
	\end{cases}\quad u(t,x)=\bar u, \quad p(t,x)=\bar p,
\end{equation}
which naturally appears even in simple Riemann problems. Given that most of the numerical methods for the Euler equations are based on the solution of Riemann problems, it is crucial to preserve such equilibria to avoid spurious oscillations around them.

In what follows, we will try to preserve such properties. The use of MPRK schemes will guarantee the positivity of the densities, while the preservation of the equilibria with constant velocity and pressure will be achieved by balancing the involved numerical fluxes correctly. The positivity of the pressure is less straightforward to guarantee, but we will show that the new flux-balanced version of MPRK schemes, introduced in Section~\ref{sec:schemes}, will require a less restrictive time step size to maintain the positivity of the pressure compared to the traditional application of MPRK schemes.

\section{Modified Patankar--Runge--Kutta schemes}\label{sec:MPRK}
We will now recall the definition of MPRK schemes and their main properties. We will then discuss how to apply them to the compressible Euler equations in the presence of non-periodic boundary conditions and source terms in Section~\ref{sec:schemes}.
Following \cite{IssuesMPRK,IR2023,IzginThesis,Ciallella2022}, MPRK schemes have been developed to solve the so-called production-destruction-rest systems (PDRS),
\begin{equation}
		u_i'(t)= r_i(\b u(t), t) + \sum_{j=1}^N (p_{ij}(\b u(t), t)-d_{ij}(\b u(t), t)),\quad \b u(0)=\b u^0\in \R^N_{>0}, \label{eq:PDRS}
	\end{equation}
where $p_{ij}(\b u(t),t), d_{ij}(\b u(t),t) \geq 0$ for all $\b u(t)>\b 0$, $t\geq 0$ for $i,j=1,\dotsc,N$. Here and in the following, vector inequalities should be understood in a pointwise way. The expressions $p_{ij}$ and $d_{ij}$ represent production and destruction terms, respectively, while $r_i$ refer to rest terms. In this notation, we require that the PDS part satisfies $p_{ij}=d_{ji}$ as well as $p_{ii}=d_{ii}=0$ for $i,j=1,\dotsc,N$. For later use and in contrast to \cite{IssuesMPRK}, the rest terms are also split for $i=1,\dotsc,N$ according to
\begin{equation}
    r_i(\b u(t), t) = r_i^p(\b u(t), t) - r_i^d(\b u(t), t)\label{eq:rp_rd}
\end{equation}
with $r_i^p,r_i^d\geq 0$ for $t\geq 0$ and  $\b u(t)\geq \b 0$. 
Note that $r_i^p$ and $r_i^d$ can always be constructed, see \cite{IR2023} or \eqref{eq:rPrD_energy} for an illustrative example.

The PDRS \eqref{eq:PDRS} is said to be \textit{positive} if any solution $\b u\colon I\to \R^N$ with $I\tm \R_{\geq 0}$ and $\b u(0)>\b 0$ satisfies $\b u(t)>\b 0$ for all $t\in I$, and \textit{conservative} if $r_i=0$ for all $i=1,\dotsc,N$. In this context, MPRK schemes are defined as follows.

\begin{definition}\label{def:MPRKdefn}
Given an explicit $s$-stage RK method described by a non-negative Butcher array, \ie $\b A,\b b,\b c \geq\b 0$, we define the corresponding MPRK scheme applied to the PDRS \eqref{eq:PDRS}, \eqref{eq:rp_rd} by
\begin{equation}\label{eq:MPRK_PDRS}
    \begin{aligned}
        u_i^{(k)}=& u_i^n + \dt\sum_{\nu=1}^{k-1}a_{k\nu}\left( r_i^p(\b u^{(\nu)}, t_n + c_\nu\dt) + \sum_{j=1}^N  p_{ij}(\b u^{(\nu)}, t_n + c_\nu\dt)\frac{u_j^{(k)}}{\pi_j^{(k)}}\right. \\
        & \left. - \left( r_i^d(\b u^{(\nu)}, t_n + c_\nu\dt)+\sum_{j=1}^Nd_{ij}(\b u^{(\nu)}, t_n + c_\nu\dt) \right)\frac{u_i^{(k)}}{\pi_i^{(k)}}\right),\quad k=1,\dotsc,s,\\
        u_i^{n+1}=& u_i^n + \dt\sum_{k=1}^{s}b_{k}\left( r_i^p(\b u^{(k)}, t_n + c_k\dt) + \sum_{j=1}^N  p_{ij}(\b u^{(k)}, t_n + c_k\dt)\frac{u_j^{n+1}}{\sigma_j}\right. \\
        & \left. - \left(r_i^d(\b u^{(k)}, t_n + c_k\dt) + \sum_{j=1}^Nd_{ij}(\b u^{(k)}, t_n + c_k\dt)\right)\frac{u_i^{n+1}}{\sigma_i}\right)
    \end{aligned}
\end{equation}
for $i=1,\dotsc,N$, where $\pi_i^{(k)},\sigma_i$ are the so-called \emph{Patankar-weight denominators} (PWDs), which are required to be positive for any $\dt\geq 0$. Additionally, $\pi_i^{(k)}=\pi_i^{(k)}(\b u^n,\b u^{(1)}, \dots, \b u^{(k-1)})$ is independent of $\mathbf u^{(k)}$, and $\sigma_i=\sigma_i(\b u^n,\b u^{(1)}, \dots,\b u^{(s)})$ is independent of $\mathbf u^{n+1}$.
\end{definition}
As described in \cite{KM18,IzginThesis}, MPRK schemes can be written in matrix notation that, for the PDRS presented above, read as follows.
\begin{remark}
    In matrix notation, \eqref{eq:MPRK_PDRS} can be rewritten as
    \begin{equation}\label{eq:MPRK_PDRS_matrix}
        \begin{aligned}
            \b M^{(k)}\byk&= \b u^n+\dt \sum_{\nu=1}^{k-1}a_{k\nu} \b r^p(\b u^{(\nu)}, t_n + c_\nu\dt),\quad k=1,\dotsc,s, \\
            \b M\b u^{n+1}&= \b u^n + \dt\sum_{k=1}^{s}b_{k} \b r^p(\b u^{(k)}, t_n + c_k\dt),
        \end{aligned}
    \end{equation}
    where $\b r^p=(r_1^p,\dotsc, r_N^p)^T$, $\b M^{(k)}=(m^{(k)}_{ij})_{1\leq i,j\leq N}$, and $\b M=(m_{ij})_{1\leq i,j\leq N}$ with
    \begin{equation}\label{eq:M_stage}
        \begin{aligned}
            m^{(k)}_{ii}&= 1+ \dt \sum_{\nu=1}^{k-1}a_{k\nu}\left(r_i^d(\b u^{(\nu)}, t_n + c_\nu\dt) + \sum_{j=1}^Nd_{ij}(\b u^{(\nu)}, t_n + c_\nu\dt) \right)\frac{1}{\pi_i^{(k)}}, \\
            m^{(k)}_{ij}&= -\dt \sum_{\nu=1}^{k-1}a_{k\nu}p_{ij}(\b u^{(\nu)}, t_n + c_\nu\dt)\frac{1}{\pi_j^{(k)}}, \quad i\neq j
        \end{aligned}
    \end{equation}
    as well as
    \begin{equation}\label{eq:M}
        \begin{aligned}
            m_{ii}&= 1+ \dt \sum_{k=1}^{s}b_k\left(r_i^d(\byk, t_n + c_k\dt)+\sum_{j=1}^N d_{ij}(\byk, t_n + c_k\dt) \right)\frac{1}{\sigma_i}, \\
            m_{ij}&= -\dt \sum_{k=1}^{s}b_kp_{ij}(\byk, t_n + c_k\dt)\frac{1}{\sigma_j}, \quad i\neq j.
        \end{aligned}
    \end{equation}
\end{remark}
The unconditional positivity of these schemes is then proved by showing that $\b M^{(k)}$ and $\b M$ in \eqref{eq:M_stage} and \eqref{eq:M} are $M$-matrices, \ie have non-negative inverses \cite{KM18}. Since Definition~\ref{def:MPRKdefn} does not determines the PWDs, these can be chosen to ensure the order of accuracy of the method. 

Below, we give the specific choices used in this work, however, we note that there is a comprehensive theory for the order conditions \cite{KM18,KM18Order3, MPDeC,IzginThesis,NSARK}. 

\paragraph{First-Order MPRK Scheme (MPE)}
Note that $\b u^{(1)}=\b u^n$, so $\bspi^{(1)}$ does not have to be defined.
The first-order scheme is based on forward Euler, which means that only $\bsigma$ needs to be defined. To obtain first order of accuracy, we use $\bsigma= \b u^n$, resulting in the MPE scheme from \cite{BDM2003}.

\paragraph{Second-Order MPRK Scheme (MPHeun)}
Although there is a one-parameter family of second-order MPRK schemes \cite{KM18}, we follow the suggestions from \cite{IssuesMPRK} and we will use the scheme based on Heun's method, given by $\bspi^{(2)}=\b u^n$ and $\bsigma=\b u^{(2)}$.

\section{Modified Patankar Finite Volume Schemes}\label{sec:schemes}
We start explaining the application of MPRK methods in the simple case of the one species Euler equations \eqref{eq:EulerEquations}-\eqref{eq:monoEuler} and then we will generalize for the multispecies Euler equations \eqref{eq:EulerEquations}-\eqref{eq:multiEuler}. Afterwards, we will highlight an issue of the method when applied to the one species Euler equations \eqref{eq:EulerEquations}-\eqref{eq:monoEuler} with a contact discontinuity \eqref{eq:cont_disc}, and how to overcome this problem.

Given $\Omega = [a,b]$ and equispaced points $x_{j+\frac12}=a+j\dx$ with $\dx=\frac{b-a}{N}$, $j=0,\dotsc,N$, we consider the cells $\Omega_i=[x_{i-\frac12},x_{i+\frac12}]$ with $i=1,\dotsc,N$. Along this work, we will use classical Finite Volume methods \cite{leveque2002finite} describing the solution  by its cell averages $\b U_i \approx \frac1\dx\int_{\Omega_i} \b U (x) \dd x$. The evolution of these values will depend on interface fluxes that are defined starting from the neighboring values.
In this work, we use the local Lax--Friedrichs (LLF) numerical flux, which, using the notation
\begin{equation}\label{eq:mean_jump}
\mean{\b g (\b U)}_{i+\frac12} \coloneqq \frac{\b g(\b U_i) + \b g(\b U_{i+1})}{2}, \quad \jump{\b g(\b U)}_{i+\frac12} \coloneqq \b g(\b U_{i+1})-\b g(\b U_i)
\end{equation} can be written as
\begin{equation}\label{eq:LF}
\begin{split}
    \hat{\b F}_{i+\frac12} &= \mean{\b F(\b U)}_{i+\frac12} -\frac{\alpha_{i+\frac12}}{2} \jump{\b U}_{i+\frac12},\\ \alpha_{i+\frac12} &= \max\{\abs{u_i} + c(\b U_i), \abs{u_{i+1}} + c(\b U_{i+1}), \abs{\mean{u}_{i+\frac12}} + c(\mean{\b U}_{i+\frac12})\}.
\end{split}
\end{equation}
This allows to write the semi-discrete form of \eqref{eq:EulerEquations} as
\begin{equation}\label{eq:EulerSemiDisc}
	\frac{\mathrm{d} \b U_i}{\mathrm{d} t}=\frac{1}{\dx}\left( \hat{\b F}_{i-\frac12}-\hat{\b F}_{i+\frac12}\right)+\b S(\b U_i),
\end{equation}
where $\b U_i=(U_{i,1},U_{i,2},U_{i,3})^T=((\rho)_i, (\rho u)_i,	(\rho E)_i)^T$ represent the cell averages of the physical quantities for each cell $\Omega_i$, and we applied the midpoint rule to the source term. 
In the following, we describe how to apply the MPE scheme to the semi-discrete form of Euler equations in order to guarantee positivity of the density and total energy. The generalization to higher-order schemes is then straightforward.

To illustrate the main idea, and for the sake of simplicity, let us first focus on the density components of \eqref{eq:EulerSemiDisc} for the Euler system \eqref{eq:EulerEquations}-\eqref{eq:monoEuler}, but the same technique will be applied to other positive variables. In the simulations, we will test also the version where both density and total energy are treated with the MP trick. Those version will be denoted with a -$\rho E$, e.g. MPE-$\rho E$. The density equation reads 
\begin{equation}\label{eq:EulerSemiDiscEnergy}
	\frac{\mathrm{d} \rho_{i}}{\mathrm{d} t}=\frac{1}{\dx}\left( \hat{F}_{i-\frac12,1}-\hat{F}_{i+\frac12,1}\right),\quad  i=1,\dotsc,N,
\end{equation}
as $S_1(\b U_i)=0$. 
First, note that due to the boundary conditions, the sum $\sum_{i=1}^N\left(\hat{F}_{i-\frac12,1}-\hat{F}_{i+\frac12,1}\right)= \hat{F}_{\frac12,1}-\hat{F}_{N+\frac12,1}$ does not vanish in general, so that \eqref{eq:EulerSemiDiscEnergy} must be viewed as a PDRS in order to apply the MPE scheme. Hence, we rewrite \eqref{eq:EulerSemiDiscEnergy} as a PDRS
\begin{equation}\label{eq:EulerSemiDiscEnergyPDRS}
	\frac{\mathrm{d} \rho_{i}}{\mathrm{d} t}=\frac{1}{\dx}\left(r_{i,1}^p-r_{i,1}^d+\sum_{j=1}^N\left(p_{i,j,1}-d_{i,j,1} \right)\right),\quad  i=1,\dotsc,N,
\end{equation}
where $r_{i,1}^p,r_{i,1}^d,p_{i,j,1},d_{i,j,1}\geq 0$, and $p_{i,j,1}=d_{j,i,1}$. In particular, we have
\begin{equation}\label{eq:Prod}
    p_{i+1,i,1}=d_{i,i+1,1}=\max\left\{0,\hat{F}_{i+\frac12,1}\right\}, \quad p_{i,i+1,1}=d_{i+1,i,1}=-\min\left\{0,\hat{F}_{i+\frac12,1}\right\}
\end{equation}
for $i=1,\dotsc,N-1,$
and $p_{i,j,1}=d_{j,i,1}=0$, if $\lvert i-j\rvert\neq1$.
Furthermore, the terms $\b r_1^p$ and $\b r_1^d$ are given by
\begin{equation}\label{eq:rPrD_energy}
	\begin{aligned}
		\b r^{p}_1&=\max\left\{0,\hat{F}_{\frac12,1}\right\}\b e_1+ \max\left\{0, -\hat{F}_{N+\frac12,1}\right\}\b e_N,\\
		\b r^{d}_1&=-\left(\min\left\{0,\hat{F}_{\frac12,1}\right\}\b e_1+ \min\left\{0, -\hat{F}_{N+\frac12,1}\right\}\b e_N\right)
	\end{aligned}
\end{equation}
with the unit vectors $\b e_1,\b e_N\in \R^N$. Now, the MPE scheme for the density reads 
\begin{equation}\label{eq:MPE_density}
	 \rho_{i}^{n+1}=\rho_{i}^n+\dtdx r^{p,n}_{i,1} + \dtdx\left( \sum_{j=1}^Np^n_{i,j,1}\frac{\rho_{j}^{n+1}}{\rho_{j}^n}-\left(r^{d,n}_{i,1}+ \sum_{j=1}^Nd^n_{i,j,1}\right)\frac{\rho_{i}^{n+1}}{\rho_{i}^n}\right),
\end{equation}
for $i=1,\dotsc,N$
and only differs from the forward Euler (FE) by the fractional weights.
We note that the MPE scheme is also first-order accurate when applied to a PDRS, see \cite{NSARK,IR2023} for more details. 
Moreover, the solution $\b u^{n+1}_1=( \rho^{n+1}_{1}, \dotsc,  \rho^{n+1}_{N})^T$ of \eqref{eq:MPE_density} can be obtained by solving the linear system $\b M_1\b u^{n+1}_1=\b u^n_1 + \dtdx \b r^{p,n}_1$, where  the matrix $\b M_1\in \R^{N\times N}$ is an M-matrix and therefore possesses a non-negative inverse. Furthermore, due to $p^n_{i,j,1}=d^n_{j,i,1}=0$ for $\lvert i- j\rvert \neq1$, the matrix $\b M_1$ is tridiagonal and sparse for large $N$. 
In particular, writing $\b M_1=(m_{i,j,1})_{1\leq i,j\leq N}$ as a special case of \eqref{eq:M}, we observe \[m_{i,i,1}=1+\dtdx\frac{r^{d,n}_{i,1}+ \sum_{j=1}^Nd^n_{i,j,1}}{U_{i,1}^n} \quad \text{ and }\quad m_{i,j,1}=-\dtdx\frac{p^n_{i,j,1}}{U_{j,1}^n},\quad   \lvert j-i\rvert =1\] and $m_{i,j,1} = 0$ otherwise. 

Similarly, this approach could also be used to guarantee the positivity of the total energy by solving the corresponding linear system $\b M_k \b u_k^{n+1}=\b u^n_k+\dtdx \b r^{p,n}_k$ for $k=3$ resulting in a total of two decoupled linear systems of size $N\times N$ that must be solved in each step, while the computation of the velocity remains unchanged, since there are no positivity constraints on the velocity.

\paragraph{MPE for Multi-species Euler}
In the case of multi-species Euler \eqref{eq:EulerEquations}-\eqref{eq:multiEuler}, one has to perform the same MPE method on each density equation and on the total energy, solving $\b M_k \b u_k^{n+1}=\b u^n_k+\dtdx \b r^{p,n}_k$ for $k=1,2,3,5$, while using the forward Euler scheme for the velocity equation.
One can define the vectors $\b u=(\b u_1^T, \b u_2^T, \b u_3^T,\b u_5^T)^T$ and $\b r^p=((\b r^p_1)^T,(\b r^p_2)^T,(\b r^p_3)^T,(\b r^p_5)^T)^T$ as well as the block diagonal matrix $\b M=\diag( \b M_1, \b M_2,\b M_3,\b M_5)$, so that $\b M \b u^{n+1}=\b u^n+\dtdx \b r^p$ represents the four decoupled linear systems that have to be solved. However, since source terms are present for the densities $\rho_1$ and $\rho_2$, we have to adjust our scheme as follows.

Consider the corresponding vectors $\b u_k=(U_{1,k},\dotsc, U_{N,k})^T=(\rho_{1,k},\dotsc, \rho_{N,k})^T$ with $k\in\{1,2\}$. The semi-discrete form \eqref{eq:EulerSemiDisc} of the reactive Euler equations for these two quantities reads
\begin{equation*}
	\begin{cases}
		\frac{\mathrm d}{\mathrm dt} \rho_{i,1} = \frac{\mathrm d}{\mathrm dt}U_{i,1}  = r_{i,1}^p-r_{i,1}^d+\sum_{j=1}^N\left(p_{i,j,1}-d_{i,j,1} \right) + S_1(\b U_i), & i=1,\dots, N,\\
		\frac{\mathrm d}{\mathrm dt}\rho_{i,2} =\frac{\mathrm d}{\mathrm dt}U_{i,2}  = r_{i,2}^p-r_{i,2}^d+\sum_{j=1}^N\left(p_{i,j,2}-d_{i,j,2} \right) + S_2(\b U_i), & i=1,\dots, N.
	\end{cases}
\end{equation*}
Next, we split each source term $S_k(\b U_i)=S^p_k(\b U_i)-S^d_k(\b U_i)$ into a production and a destruction part. 
In particular,
\[S^p_1(\b U_i)=\delta 2M_1k_f(T)\frac{U_{i,2}}{M_2}\sum_{s=1}^3\frac{U_{i,s}}{M_s}\geq 0  \quad \text{for $U_{i,s}\geq 0$}\] 
is the source production term and 
\[S^d_1(\b U_i)=\delta 2M_1k_b(T)\left(\frac{U_{i,1}}{M_1}\right)^2\sum_{s=1}^3\frac{U_{i,s}}{M_s}\geq 0 \quad \text{for $U_{i,s}\geq 0$}\] 
is the source destruction term of $S_1$. 
According to \eqref{eq:source} and $2M_1=M_2$, we observe $S_2^p(\b U_i)=S_1^d(\b U_i)$ and $S_2^d(\b U_i)=S_1^p(\b U_i)$. As a consequence, the MPE scheme for the first two densities reads for $i=1,\dots,N$
\begin{equation*}
	\begin{cases}
		\begin{split}
			\rho_{i,1}^{n+1}=U^{n+1}_{i,1} &= U^n_{i,1}+ \dtdx \left( r_{i,1}^{p,n}  +\sum_{j=1}^Np^n_{i,j,1}\frac{U^{n+1}_{j,1}}{U^n_{j,1}}-\left(r_{i,1}^{d,n}+\sum_{j=1}^Nd^n_{i,j,1} \right)\frac{U^{n+1}_{i,1}}{U^n_{i,1}} \right)\\
			&+ \dt \left(S_1^p(\b U^n_i)\frac{U^{n+1}_{i,2}}{U^n_{i,2}}-S_1^d(\b U^n_i)\frac{U^{n+1}_{i,1}}{U^n_{i,1}}\right)
		\end{split}\\
		\begin{split}
			\rho_{i,2}^{n+1}=U^{n+1}_{i,2} &= U^n_{i,2}+ \dtdx \left( r_{i,2}^{p,n}  +\sum_{j=1}^Np^n_{i,j,2}\frac{U^{n+1}_{j,2}}{U^n_{j,2}}-\left(r_{i,2}^{d,n}+\sum_{j=1}^Nd^n_{i,j,2} \right)\frac{U^{n+1}_{i,2}}{U^n_{i,2}} \right)\\
			&+ \dt \left(S_2^p(\b U^n_i)\frac{U^{n+1}_{i,1}}{U^n_{i,1}}-S_2^d(\b U^n_i)\frac{U^{n+1}_{i,2}}{U^n_{i,2}}\right)
		\end{split}\\
	\end{cases}
\end{equation*}
and hence, the full scheme for computing the densities and energy can be written as
\begin{equation}\label{eq:MPE_Vector}
	(\b M + \hat{\b S})\b u^{n+1}= \b u^n+\dtdx \b r^{p,n}
\end{equation}
with $\b M$ as defined above and $\hat{\b S}=\dt \begin{pmatrix}
\b S_{d,1} & \b S_{p,2} & \b 0&\b 0 \\
\b S_{p,1}	& \b S_{d,2} & \b 0&\b 0 \\
 \b 0 & \b 0 & \b 0& \b 0\\
	\b 0 & \b 0 & \b 0& \b 0
\end{pmatrix}$, where $\b S_{d,k}=\diag\left(\frac{S_k^d(\b U_1^n)}{U_{1,k}^n},\dotsc,\frac{S_k^d(\b U_N^n)}{U_{N,k}^n}\right) $ and $\b S_{p,k}=-\b S_{d,k}$. Note that the linear system \eqref{eq:MPE_Vector} is only coupled through the sparse matrix $\hat{\b S}$. 
\vspace{5mm}
\begin{lemma}
    The Matrix $\b B \coloneqq \b M + \hat{\b S}$ from \eqref{eq:MPE_Vector} is an $M$-matrix.
\end{lemma}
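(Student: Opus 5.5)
The plan is to verify the standard sufficient criterion: a Z-matrix (nonpositive off-diagonal entries) with positive diagonal that is strictly diagonally dominant by columns is a nonsingular $M$-matrix. Equivalently, $\b B^T$ is a strictly row-diagonally dominant Z-matrix with positive diagonal, hence an $M$-matrix, and $\b B^{-1}=((\b B^T)^{-1})^T\geq \b 0$. Throughout, $\b U^n_i>\b 0$ is assumed, so all Patankar weights and the source terms $S_k^{p},S_k^d$ are nonnegative.

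\emph{Sign pattern.} In the blocks $\b M_k$ the diagonal entries are $m_{i,i,k}=1+\dtdx\,(r^{d,n}_{i,k}+\sum_j d^n_{i,j,k})/U^n_{i,k}\geq 1$, and the off-diagonal entries $-\dtdx\, p^n_{i,j,k}/U^n_{j,k}$ are nonpositive. In $\hat{\b S}$ the diagonal blocks $\dt\,\b S_{d,k}$ have nonnegative entries, so they only enlarge the diagonal. The off-diagonal blocks $\dt\,\b S_{p,k}=-\dt\,\b S_{d,k}$ are nonpositive. I will also check that they are consistent with the scheme. For instance, the $(1,2)$ block must contain $-\dt\, S_1^p(\b U_i^n)/U^n_{i,2}$, and this equals $-\dt\, S_2^d(\b U_i^n)/U^n_{i,2}$ by $S_2^d=S_1^p$. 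Hence $\b B$ is a Z-matrix with positive diagonal.

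\emph{Column sums.} I expect this step to be the crux, since it is where conservation enters. Fix a column belonging to the unknown $U_{j,k}$. Inside $\b M_k$ the off-diagonal entries in column $j$ sum to $-\dtdx\sum_l p^n_{l,j,k}/U^n_{j,k}=-\dtdx\sum_l d^n_{j,l,k}/U^n_{j,k}$, using $p_{l,j,k}=d_{j,l,k}$. Adding the diagonal entry therefore gives the column sum $1+\dtdx\, r^{d,n}_{j,k}/U^n_{j,k}\geq 1$.

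For $k\in\{1,2\}$ the matrix $\hat{\b S}$ contributes two further entries to the same column. One is $+\dt\, S_k^d(\b U^n_j)/U^n_{j,k}$ on the diagonal. The other is $-\dt\, S_k^d(\b U^n_j)/U^n_{j,k}$ in the other species block, at the same cell index, and these two cancel exactly. This cancellation is the mass conservation $S_1=-S_2$ of the reaction. For $k=3,5$ the matrix $\hat{\b S}$ contributes nothing.

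Consequently every column of $\b B$ satisfies
\[
b_{jj}-\sum_{l\neq j}\abs{b_{lj}}=\sum_l b_{lj}\geq 1>0,
\]
which is strict column diagonal dominance. Invoking the criterion above concludes the proof. The only care needed is the bookkeeping of which block entry of $\hat{\b S}$ lands in which column.
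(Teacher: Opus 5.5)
Your proposal is correct and follows essentially the same route as the paper. The paper also shows that $\b B$ is a Z-matrix with positive diagonal, with $\b B^T$ diagonally dominant because the column sums of $\dt\,\b S_{d,k}$ are exactly cancelled by those of $\dt\,\b S_{p,k}$, and then concludes (via the Kopecz--Meister argument) that $\b B^T$, hence $\b B$, is an $M$-matrix. Your explicit column sums $1+\dtdx\, r^{d,n}_{j,k}/U^n_{j,k}\geq 1$ fill in the details the paper leaves to that reference, and you treat the $k=3,5$ blocks directly where the paper cites them as known $M$-matrices.
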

\begin{proof}
It is enough to restrict to the upper left $2N\times 2N$ block as the other blocks on the diagonal are M-matrices \cite{IzginThesis}.
Hence, fix $k\in \{1,2\}$ and observe that  $\b 1^T \b S_{d,k} =\Vert \b 1^T \b D_{p,k}\Vert_1$. Hence, since the diagonal of $\b B$ is positive and $\b M_k^T$ is diagonally dominant, we deduce that $\b B^T $ is also diagonally dominant along the same lines as in \cite[Lemma~2.8]{KM18}. Together with $b_{ij}\leq 0$ for $i\neq j$, we conclude that $\b B^T$ is an $M$-matrix, which means that the same holds for $\b B$.
\end{proof}
\subsection{Contact Discontinuities Preservation}
In a contact discontinuity, i.e., when $u(x)=\bar u$ and $p(x)=\bar p$ are constant values, whatever form takes the density, the density equation becomes a transport equation with velocity $\bar u$, while the momentum and energy conservation laws guarantee that velocity and pressure remain constant.

\begin{proposition}[Contact discontinuity preservation of explicit Euler LLF]\label{prop:LF}
    Consider the Euler equations \eqref{eq:EulerEquations}-\eqref{eq:monoEuler} with homogeneous Neumann or periodic boundary conditions, and apply the LLF scheme (explicit Euler method in time) together with the EoS $p=(\gamma-1)e$, where $e\coloneqq \rho E-0.5\rho u^2 $ is the internal energy. If the initial conditions contain only contact discontinuities, i.e., $u=\bar u$ and $p=\bar p$, then velocity and pressure stay constant for all time steps of the method.
\end{proposition}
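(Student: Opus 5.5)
We argue by induction over the time steps. It suffices to show one step: if at time $t_n$ we have $u_i^n=\bar u$ and $p_i^n=\bar p$ for all $i$ (with arbitrary $\rho_i^n>0$), then the same holds at $t_{n+1}$. Under this hypothesis every cell satisfies
\[(\rho u)_i^n=\bar u\rho_i^n,\qquad (\rho E)_i^n=\frac{\bar p}{\gamma-1}+\tfrac12\bar u^2\rho_i^n .\]
Hence both the momentum and the total energy are affine functions of the density with cell-independent coefficients. The same is true of the physical flux components:
\[\rho u^2+p=\bar u^2\rho+\bar p,\qquad u(\rho E+p)=\bar u\Bigl(\tfrac{\gamma\bar p}{\gamma-1}+\tfrac12\bar u^2\rho\Bigr).\]

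The key step is to show that the LLF fluxes \eqref{eq:LF} inherit this affine structure. Write $G_{i+\frac12}:=\hat F_{i+\frac12,1}=\bar u\,\mean{\rho}_{i+\frac12}-\frac{\alpha_{i+\frac12}}{2}\jump{\rho}_{i+\frac12}$. The mean and jump operators in \eqref{eq:mean_jump} are linear, and the jump annihilates constants. Whatever value $\alpha_{i+\frac12}$ takes, the same $\alpha_{i+\frac12}$ multiplies the jump in every component, so
\[\hat F_{i+\frac12,2}=\bar u\,G_{i+\frac12}+\bar p,\qquad \hat F_{i+\frac12,3}=\tfrac12\bar u^2 G_{i+\frac12}+\tfrac{\gamma\bar p}{\gamma-1}\bar u .\]
It is essential that $\alpha_{i+\frac12}$ is a single scalar shared by all components. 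Its possibly complicated dependence on $c$ is therefore irrelevant.

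Inserting these fluxes into the forward Euler update of \eqref{eq:EulerSemiDisc} with $\b S\equiv \b 0$, the constants cancel in the flux differences, and we obtain
\[(\rho u)_i^{n+1}=\bar u\rho_i^{n+1},\qquad (\rho E)_i^{n+1}=\frac{\bar p}{\gamma-1}+\tfrac12\bar u^2\rho_i^{n+1}.\]
Provided $\rho_i^{n+1}\neq0$, this gives $u_i^{n+1}=\bar u$. Substituting into the EoS then yields $p_i^{n+1}=(\gamma-1)\bigl((\rho E)_i^{n+1}-\tfrac12\bar u^2\rho_i^{n+1}\bigr)=\bar p$.

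The main obstacle is the boundary treatment, which must preserve the same affine relation between $\hat F_{\frac12}$, $\hat F_{N+\frac12}$ and the corresponding density flux. For periodic conditions the boundary interfaces are ordinary interior ones. For homogeneous Neumann conditions we use ghost cells copying the adjacent states, which themselves satisfy $u=\bar u$, $p=\bar p$, so the same identities hold. A minor point is that the density must remain nonzero, so that the velocity is defined. We will assume this implicitly, as the statement concerns well-defined iterates; it holds, for instance, under the usual CFL condition.
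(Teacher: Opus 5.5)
Your proof is correct and follows essentially the same route as the paper: you rewrite the momentum and energy LLF fluxes as $\bar u\,G_{i+\frac12}+\bar p$ and $\tfrac12\bar u^2 G_{i+\frac12}+\bar u(\bar e+\bar p)$, where $\bar e+\bar p=\gamma\bar p/(\gamma-1)$, and then substitute the density update into the momentum and energy updates. Your explicit remarks on the induction, on $\alpha_{i+\frac12}$ being a single scalar shared by all components, on the ghost-cell treatment of the boundary, and on the need for $\rho_i^{n+1}\neq 0$ make precise points that the paper uses only implicitly.
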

\begin{proof}
 Since $u=\bar u$ and $p= \bar p$ are constant at the beginning, we observe that the LLF \eqref{eq:LF} for the momentum equation becomes 
\begin{equation}\label{eq:mom_LF}
    \hat{F}_{i+\frac12, 2} = \bar u\hat{F}_{i+\frac12, 1} + \bar p.
\end{equation}
This leads to the following update equation for the momentum
\begin{equation}
    \begin{split}
        \frac{(\rho u)^{n+1}_i - (\rho \bar{u})^{n}_i}{\Delta t} = -\frac{1}{\dx}\left( \hat{\b F}_{i+\frac12,2}^n - \hat{\b F}_{i-\frac12,2}^n\right)= -\bar u \frac{1}{\dx}\left( \hat{\b F}_{i+\frac12,1}^n - \hat{\b F}_{i-\frac12,1}^n\right),
    \end{split}
\end{equation}
which is clearly verified by $u^{n+1}_i=\bar{u}$ and by the LLF update of the density 
\begin{equation}\label{eq:rho_in_prop}
    \frac{\rho^{n+1}_i - \rho^{n}_i}{\Delta t} =-\frac{1}{\dx}\left( \hat{\b F}_{i+\frac12,1}^n - \hat{\b F}_{i-\frac12,1}^n \right).
\end{equation}
Note that it follows from \eqref{eq:mom_LF}-\eqref{eq:rho_in_prop} that $u$ will stay constant after one step of the LLF scheme.
Furthermore, since $p=\bar p$ we deduce from the EoS for ideal gases that the internal energy $e$ is also constant since $e= \rho E-0.5\rho u^2 = \frac{p}{\gamma-1}$. Hence, writing \[F_3(\b U)= u(\rho E + p)= u(e + 0.5\rho u^2 + p),\] we obtain
\begin{equation}\label{eq:energy_LF}
\hat{F}_{i+\frac12, 3} = 0.5\bar u^2\hat{F}_{i+\frac12, 1} + \bar u(\bar e + \bar p).
\end{equation}
From \eqref{eq:energy_LF}, we conclude that
\[(\rho E)_i^{n+1} = (\rho E)_i^n - \dtdx(\hat{F}_{i+\frac12, 3}-\hat{F}_{i-\frac12, 3})= (\rho E)_i^n + 0.5\bar u^2(\rho_i^{n+1}-\rho_i^n).\]
Thus,
\[e_i^{n+1}=(\rho E)_i^{n+1}-0.5\rho_i^{n+1} \bar u^2=(\rho E)_i^n-0.5\rho_i^n \bar u^2= \bar e, \]
which means that also pressure is constant in time. If the boundary conditions verify the same constraints $u=\bar{u}$ and $p=\bar p$, then the same arguments can be applied to boundary cells. 
\end{proof}

\subsubsection{Modified Patankar Euler LLF for Euler Equations}
However, the situation is different when MPRK schemes are applied, as the MP trick is never applied to the momentum equation. Hence, since $\rho$ is computed with an MPRK scheme, the $\rho u$ must be computed in a consistent manner to keep $u$ constant. 
Indeed, the main idea of the proof of Proposition~\ref{prop:LF} is that we can substitute the equation for the computation of $\rho$ into the equations for momentum and total energy. 
Hence, if we only modify the density fluxes $\hat F_{i+\frac12,1}$ using Patankar weights, all we have to do is to appropriately weigh the other numerical fluxes with the same Patankar-weights as for the density equation.
To that end, we note that \eqref{eq:mom_LF} and \eqref{eq:energy_LF} can be split into a part that contains the numerical flux of the density, which should be weighted exactly as in the density equation, while the other terms should be kept as in the explicit update of the original fluxes.

This strategy is in conflict with the weighting of the energy equation with an MP trick, so we could not guarantee the positivity of the total energy. Nevertheless, the pressure is the quantity whose positivity has to be guaranteed, and from the numerical experiments it seemed that the novel weighting due to the density MP trick facilitates the positivity of the pressure much more than using the MP trick on the energy equation. 
Hence, we will stick to this strategy and we will consider in future works the combination of MP trick on the energy equation and the preservation of contact discontinuities.
From here on, we focus on the ansatz of applying the MP trick to the density equation and weigh the other fluxes consistently.


Overall, we will split the numerical fluxes into a part that should be weighted and another one that should be left untouched. The first will have a superscript index $w$ and the latter will be indexed by $u$, \ie we split
$\hat{F}_{i+\frac12, k} = \hat{F}^w_{i+\frac12, k} + \hat{F}^u_{i+\frac12, k}$. According to \eqref{eq:mom_LF}, we define for the momentum equation
\begin{equation*}
     \hat{F}^w_{i+\frac12, 2} = \mean{\rho u^2}_{i+\frac12} - \frac{\alpha_{i+\frac12}}{2} \jump{\rho u}_{i+\frac12}, \quad  \hat{F}^u_{i+\frac12, 2} = \mean{p}_{i+\frac12}
\end{equation*}
using the jump $\jump{\cdot}_{i+\frac12}$ and mean $\mean{\cdot}_{i+\frac12}$ operators from \eqref{eq:mean_jump}.
From \eqref{eq:energy_LF}, the energy fluxes can be written in the form
\begin{equation*}
     \hat{F}^w_{i+\frac12, 3} = \mean{0.5 \rho u^3}_{i+\frac12} - \frac{\alpha_{i+\frac12}}{2} \jump{0.5\rho u^2}_{i+\frac12}, \quad  \hat{F}^u_{i+\frac12, 3} = \mean{u(e+p)}_{i+\frac12}- \frac{\alpha_{i+\frac12}}{2} \jump{e}_{i+\frac12}.
\end{equation*}
In what follows, we deduce the variant of the MPE scheme. The generalization to higher order is straightforward.
Now, recall that the production and destruction terms for the density are chosen analogously to \eqref{eq:Prod}. Hence, the weighting of  $\hat{F}^w_{i+\frac12, k}$ with $k\in \{2,3\}$ for momentum and energy will depend on the sign of $\hat{F}_{i+\frac12, 1}$. 
In particular, if $\hat{F}_{i+\frac12, 1}<0$ then this corresponds to a non-zero $p_{i,i+1,1}$ term with a weight of
\begin{equation}\label{eq:w}
    w^{n+1}_{i+1}\coloneqq \tfrac{\rho_{i+1}^{n+1}}{\rho_{i+1}^n},
\end{equation} 
and, if $\hat{F}_{i+\frac12, 1}>0$, the term corresponds to a destruction term $d_{i,i+1,1}$, which is weighted with $w^{n+1}_i$. The rest terms can be treated analogously. 
We define for $k=2,3$ and $i=1,\dotsc,N-1$ the auxiliary production and destruction terms by
\begin{equation}\label{eq:aux_prod}
    \mathfrak{p}_{i,i+1,k} \coloneqq \begin{cases}
    -\hat{F}^w_{i+\frac12, k},& \hat{F}_{i+\frac12, 1} <0,\\
    0,& \text{otherwise}
\end{cases},\quad \mathfrak{d}_{i,i+1,k} \coloneqq \begin{cases}
    \hat{F}^w_{i+\frac12, k},& \hat{F}_{i+\frac12, 1} \geq 0,\\
    0,& \text{otherwise}
\end{cases}, 
\end{equation}
and set $\mathfrak{p}_{i,j,k} = \mathfrak{d}_{j,i,k}$ for all $i,j=1,\dotsc,N$,
\begin{equation}
\mathfrak{r}^d_{1,k} \coloneqq \begin{cases}
-    \hat{F}^w_{\frac12, k},& \hat{F}_{\frac12, 1} <0,\\
    0,& \text{otherwise},
\end{cases} \mathfrak{r}^d_{2,k}=\dotsc=\mathfrak{r}^d_{N-1,k}=0, \mathfrak{r}^d_{N,k} \coloneqq \begin{cases}
    \hat{F}^w_{N+\frac12, k},& \hat{F}_{N+\frac12, 1} \geq 0,\\
    0,& \text{otherwise},
\end{cases}
\end{equation}
as well as $\mathfrak{r}^p_{i,k}\coloneqq  r_{i,k} + \mathfrak{r}^d_{i,k} + (\hat{F}^u_{i-\frac12, k}- \hat{F}^u_{i+\frac12, k})$.
Then, we can prove the following result.
\begin{theorem}
The scheme
\begin{equation}\label{eq:MPE_variant}
\begin{aligned}
	 U_{i,1}^{n+1}&=U_{i,1}^n+\dtdx r^{p,n}_{i,1} + \dtdx\left( \sum_{j=1}^Np^n_{i,j,1}w^{n+1}_j-\left(r^{d,n}_{i,1}+ \sum_{j=1}^Nd^n_{i,j,1}\right)w^{n+1}_i\right)\\
     	 U_{i,k}^{n+1}&=U_{i,k}^n+\dtdx \mathfrak r^{p,n}_{i,k} + \dtdx\left( \sum_{j=1}^N\mathfrak p^n_{i,j,k}w^{n+1}_j-\left(\mathfrak r^{d,n}_{i,k}+ \sum_{j=1}^N\mathfrak d^n_{i,j,k}\right)w^{n+1}_i\right), k=2,3,
     \end{aligned}
\end{equation}
with $w_i= U^{n+1}_{i,1}/U^n_{i,1}$ is first-order accurate and
unconditionally preserves the positivity of the density. Additionally, if $u$ and $p$ are initially constant, they remain constant at all times.
\end{theorem}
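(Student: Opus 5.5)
The proof splits into three parts, taken in the order: positivity, first-order accuracy, contact preservation.

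\emph{Positivity.} The first line of \eqref{eq:MPE_variant} is exactly the MPE scheme \eqref{eq:MPE_density} for the density. It is decoupled from the momentum and energy updates, since the weights $w_j^{n+1}$ depend only on $\rho^{n+1}$. Hence $\rho^{n+1}=\b M_1^{-1}(\b u_1^n+\dtdx \b r_1^{p,n})$, where $\b M_1$ is the tridiagonal M-matrix discussed after \eqref{eq:MPE_density}. Because $\b M_1^{-1}\geq 0$, $\b u_1^n>\b 0$ and $\b r_1^{p,n}\geq \b 0$, positivity of $\rho^{n+1}$ follows for every $\dt$. To get strict positivity I would note that $\b M_1^{-1}$ has a positive diagonal, or simply cite the positivity result for MPE from \cite{BDM2003,KM18}. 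Once $\rho^{n+1}$ is known, the updates for $k=2,3$ are explicit, because all $w^{n+1}_j$ are known.

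\emph{Accuracy.} I would write every weight as $w_j^{n+1}=1+(\rho_j^{n+1}-\rho_j^n)/\rho_j^n$. The density update satisfies $\rho^{n+1}-\rho^n=\O(\dt)$ with constants depending on $\dx$ and the data; this follows from boundedness of $\b M_1^{-1}$ via diagonal dominance, or from the first-order result of \cite{NSARK,IR2023}. Hence $w_j^{n+1}=1+\O(\dt)$. Substituting $w=1$ into the $k=2,3$ lines, I would check that the fluxes recombine into the original ones. By construction $\mathfrak p_{i,j,k}=\mathfrak d_{j,i,k}$, and $\mathfrak r^p-\mathfrak r^d$ reproduces the boundary terms together with the $\hat F^u$ differences. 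Therefore $\sum_j(\mathfrak p_{i,j,k}-\mathfrak d_{i,j,k})+\mathfrak r^p_{i,k}-\mathfrak r^d_{i,k}=\hat F_{i-\frac12,k}-\hat F_{i+\frac12,k}$, which is just bookkeeping of the cases $\hat F_{i\pm\frac12,1}\gtrless 0$. The scheme is then forward Euler plus $\dt\cdot\O(\dt)$, so the local error is $\O(\dt^2)$ and the scheme is first order.

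\emph{Contact preservation.} This is the main obstacle, although it is mostly careful bookkeeping. Assume $u^n\equiv\bar u$ and $p^n\equiv\bar p$. Then $\hat F^w_{i+\frac12,2}=\bar u\hat F_{i+\frac12,1}$ and $\hat F^w_{i+\frac12,3}=\tfrac12\bar u^2\hat F_{i+\frac12,1}$. Also $\hat F^u_{i+\frac12,2}=\bar p$ and $\hat F^u_{i+\frac12,3}=\bar u(\bar e+\bar p)$ are constant, since $\jump{e}=0$; their differences therefore vanish in $\mathfrak r^p$. Reading \eqref{eq:aux_prod} against \eqref{eq:Prod} case by case gives $\mathfrak p_{i,j,2}=\bar u\,p_{i,j,1}$ and $\mathfrak d_{i,j,2}=\bar u\,d_{i,j,1}$, with the analogous statements for the rest terms, and the same with factor $\tfrac12\bar u^2$ for $k=3$. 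This step needs a careful sign check at the boundaries and at the convention $\hat F_{i+\frac12,1}=0$, and I expect it to be the delicate part. It yields $(\rho u)_i^{n+1}-\bar u\rho_i^n=\bar u(\rho_i^{n+1}-\rho_i^n)$, hence $u_i^{n+1}=\bar u$. It likewise yields $(\rho E)_i^{n+1}=(\rho E)_i^n+\tfrac12\bar u^2(\rho_i^{n+1}-\rho_i^n)$, hence $e^{n+1}_i=\bar e$ and $p^{n+1}_i=\bar p$, exactly as in Proposition~\ref{prop:LF}. Induction over $n$, with boundary conditions respecting $u=\bar u$ and $p=\bar p$, completes the argument.
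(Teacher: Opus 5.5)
Your proposal is correct and takes the same approach as the paper. Positivity is inherited from the decoupled MPE density update. Contact preservation follows from the identities $\mathfrak p_{i,j,2}=\bar u\,p_{i,j,1}$ and $\mathfrak p_{i,j,3}=\tfrac12\bar u^2\,p_{i,j,1}$ (and likewise for $\mathfrak d$, $\mathfrak r^d$, $\mathfrak r^p$), which let you substitute the density update into the momentum and energy updates as in Proposition~\ref{prop:LF}. First order comes from $w_i=1+\O(\dt)$.

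You give somewhat more detail than the paper, e.g.\ strict positivity via the positive diagonal of $\b M_1^{-1}$, and a column-diagonal-dominance bound on $\b M_1^{-1}$ as an alternative to citing \cite{NSARK} for $w_i=1+\O(\dt)$.
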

\begin{proof}
The positivity of the density is already known. Additionally, we chose the auxiliary production, destruction and rest terms in such a way that we can mimic the proof of Proposition~\ref{prop:LF} since we have 
\[p_{i,i\pm 1,1}= \mp \hat{F}_{i\pm\frac12, 1} \Longrightarrow \mathfrak p_{i,i\pm 1,k}= \mp \hat{F}^w_{i\pm\frac12, k},\]
and similar relations hold for the $\mathfrak d$ and $\mathfrak r^d$ terms. Finally, the consistent weighting allows us to substitute the weighted numerical density-fluxes into the other equations as we did in Proposition~\ref{prop:LF}. Finally, since the MPE scheme for the density equation is first-order accurate, we know from \cite{NSARK} that $w_i=1+\O(\dt)$. Substituting this relation into the momentum and total energy equation, we end up with the original LLF scheme plus an additional $\O(\dt^2)$ error.
\end{proof}
We want to remark that the weights added to the momentum and total energy equations are explicitly computed from the density update, which is the only equation where the MP trick is applied. Hence, this flux-balancing does not need further linear solves in contrast to classical MPRK schemes.

In the numerical results, we will see that this splitting and weighting of the fluxes of momentum and energy will also help to keep the pressure positive. 
It will not lead to a provably positivity preserving scheme for the pressure, but the simulations run with this split outperform by huge factors the scheme where the MP is applied to the density and total energy and explicit Euler is used for the momentum equation.

\subsubsection{Application to Multi-Species Euler}

Now, we can extend this result to multi-species Euler equations \eqref{eq:EulerEquations}, where the contact discontinuity preservation property is directly relevant only in the case without source terms. Nevertheless, we hope that the increased stability of the scheme, in particular in the positivity of the pressure, will also be transferred to the reactive case.

The methodology is very similar to the previous section: the densities equations are each of them treated with MP Euler, then we use the sum of the weights to weigh the part of the numerical fluxes of momentum and energy that should resemble a density numerical flux.
For $k=1,\dots,3$, we have that the three densities evolve as
\begin{equation}
    \frac{ \rho^{n+1}_{i,k}-\rho^{n}_{i,k}}{\Delta t} = \frac{1}{\Delta x} \left(r^{p,n}_{i,k} + \sum_{j=1}^N (p^n_{i,j,k} w_{j,k}^{n+1} - (r^{d,n}_{i,k}+d^n_{i,j,k})w_{i,k}^{n+1}) \right),
\end{equation}
with $w_{i,k}^{n+1} = \rho^{n+1}_{i,k}/\rho^{n}_{i,k}$.
Then, we take the sum of these equations over $k$, to obtain 
\begin{equation}
    \frac{ \rho^{n+1}_{i}-\rho^{n}_{i}}{\Delta t} = \frac{1}{\Delta x} \sum_{k=1}^3 \left(r^{p,n}_{i,k} + \sum_{j=1}^N (p_{i,j,k} w_{j,k}^{n+1} - (r^{d,n}_{i,k}+d_{i,j,k})w_{i,k}^{n+1}) \right).
\end{equation}
The update of $\rho$ is the quantity that should be factored out in the momentum and energy equations.
So, let us define for the momentum for $k=1,\dots,3$
\begin{equation*}
     \hat{F}^{w,k}_{i+\frac12, 4} = \mean{\rho_k u^2}_{i+\frac12} - \frac{\alpha_{i+\frac12}}{2} \jump{\rho_k u}_{i+\frac12}, \quad  
     \hat{F}^u_{i+\frac12, 4} = \mean{p}_{i+\frac12},
\end{equation*}
and for the energy, we define for $k=1,\dots,3$ fluxes
\begin{equation*}
     \hat{F}^{w,k}_{i+\frac12, 5} = \mean{0.5 \rho_k u^3}_{i+\frac12} - \frac{\alpha_{i+\frac12}}{2} \jump{0.5\rho_k u^2}_{i+\frac12}, \quad  
     \hat{F}^u_{i+\frac12, 5} = \mean{u(e+p)}_{i+\frac12}- \frac{\alpha_{i+\frac12}}{2} \jump{e}_{i+\frac12}.
\end{equation*}

We define for $\ell=4,5$, $k=1,\dots,3$ and $i=1,\dotsc,N-1$ the auxiliary production and destruction terms by
\begin{equation}\label{eq:aux_prod}
    \mathfrak{p}^k_{i,i+1,\ell} \coloneqq \begin{cases}
    -\hat{F}^{w,k}_{i+\frac12, \ell},& \hat{F}_{i+\frac12, k} <0,\\
    0,& \text{otherwise}
\end{cases},\quad \mathfrak{d}^k_{i,i+1,\ell} \coloneqq \begin{cases}
    \hat{F}^{w,k}_{i+\frac12, \ell},& \hat{F}_{i+\frac12, k} \geq 0,\\
    0,& \text{otherwise}
\end{cases}, 
\end{equation}
and set $\mathfrak{p}^k_{i,j,\ell} = \mathfrak{d}^k_{j,i,\ell}$ for all $i,j=1,\dotsc,N$,
\begin{equation}
\mathfrak{r}^{d,k}_{1,\ell} \coloneqq \begin{cases}
-    \hat{F}^{w,k}_{\frac12, \ell},& \hat{F}_{\frac12, k} <0,\\
    0,& \text{otherwise}
\end{cases}, \mathfrak{r}^{d,k}_{2,\ell}=\dotsc=\mathfrak{r}^{d,k}_{N-1,\ell}=0, \mathfrak{r}^{d,k}_{N,\ell} \coloneqq \begin{cases}
    \hat{F}^{w,k}_{N+\frac12, \ell},& \hat{F}_{N+\frac12, k} \geq 0,\\
    0,& \text{otherwise}
\end{cases},
\end{equation}
as well as $\mathfrak{r}^p_{i,\ell}\coloneqq  r_{i,\ell} + \mathfrak{r}^d_{i,\ell} + (\hat{F}^u_{i-\frac12, \ell}- \hat{F}^u_{i+\frac12, \ell})$.
Then, we can prove the following result.
\begin{theorem}
The scheme 
\begin{equation}\label{eq:MPE_variant_multispecies}
\begin{aligned}
	 U_{i,k}^{n+1}&=U_{i,k}^n+\dtdx\left( r^{p,n}_{i,k} +  \sum_{j=1}^Np^n_{i,j,k}w^{n+1}_{j,k}-\left(r^{d,n}_{i,1}+ \sum_{j=1}^Nd^n_{i,j,1}\right)w^{n+1}_{i,k}\right), \, k=1,\dots,3,\\
     	 U_{i,\ell}^{n+1}&=U_{i,\ell}^n+\dtdx  \left( \mathfrak r^{p,n}_{i,\ell} +  \sum_{k=1}^3 \left( \sum_{j=1}^N\mathfrak p^{k,n}_{i,j,\ell}w^{n+1}_{j,k}-\left(\mathfrak r^{d,k,n}_{i,\ell}+ \sum_{j=1}^N\mathfrak d^{k,n}_{i,j,\ell}\right)w^{n+1}_{i,k}\right)\right), \ell=4,5,
     \end{aligned}
\end{equation}
with $w_{i,k}= U^{n+1}_{i,k}/U^n_{i,k}$, discretizing the multi-species Euler without reaction source terms,
unconditionally preserves the positivity of the densities. Additionally, if  $u$ and $p$ are initially constant, they remain constant at all times. Moreover, the scheme is first-order accurate.
\end{theorem}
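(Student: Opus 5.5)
The proof has three parts, handled in order: positivity, preservation of constant $u$ and $p$, and first-order accuracy. In each part I will reduce the multi-species scheme \eqref{eq:MPE_variant_multispecies} to the single-species argument of the previous theorem and Proposition~\ref{prop:LF}.

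\emph{Positivity.} Without reaction sources the three density equations in \eqref{eq:MPE_variant_multispecies} decouple. Each one is exactly the MPE scheme \eqref{eq:MPE_density} applied to a PDRS of the form \eqref{eq:EulerSemiDiscEnergyPDRS}, with production, destruction and rest terms built from $\hat F_{i\pm\frac12,k}$ as in \eqref{eq:Prod}, \eqref{eq:rPrD_energy}. Each matrix $\b M_k$ is therefore an M-matrix, as argued after \eqref{eq:MPE_density} (or by the Lemma with $\hat{\b S}=\b 0$). Since $\b u_k^n+\dtdx\b r^{p,n}_k>\b 0$, this gives $\rho_{i,k}^{n+1}>0$ for every $\dt$. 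The weights $w_{i,k}^{n+1}$ are then well defined and positive, and the updates for $\ell=4,5$ are explicit.

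\emph{Contact preservation.} Suppose $u\equiv\bar u$ and $p\equiv\bar p$ at time $t_n$, so that $e\equiv\bar e$ by the EoS. Then $\hat F^{w,k}_{i+\frac12,4}=\bar u\hat F_{i+\frac12,k}$ and $\hat F^{w,k}_{i+\frac12,5}=\tfrac12\bar u^2\hat F_{i+\frac12,k}$. The unweighted parts reduce to $\hat F^u_{\cdot,4}=\bar p$ and $\hat F^u_{\cdot,5}=\bar u(\bar e+\bar p)$, so their differences vanish and $\mathfrak r^p$ contributes nothing beyond what mirrors $r^p_{i,k}$. By the case definitions \eqref{eq:aux_prod}, $\mathfrak p^k_{i,j,\ell}=c_\ell p_{i,j,k}$, $\mathfrak d^k_{i,j,\ell}=c_\ell d_{i,j,k}$ and similarly for the rest terms, with $c_4=\bar u$ and $c_5=\tfrac12\bar u^2$. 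Here I must check the sign convention: the single-species definitions write $\mathfrak p$ with $-\hat F^w$ while $p$ uses $-\min\{0,\hat F\}$, and these agree in both cases. Substituting, the bracket in the $\ell$-equation becomes $c_\ell\sum_k(\cdot)_k$. By the density updates this equals $c_\ell\,\tfrac{\dx}{\dt}(\rho_i^{n+1}-\rho_i^n)$ with $\rho=\sum_k\rho_k$. Hence $(\rho u)_i^{n+1}=\bar u\rho_i^{n+1}$ and $(\rho E)^{n+1}_i-\tfrac12\bar u^2\rho_i^{n+1}=\bar e$.

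The pressure step is where the multi-species case differs, and I expect it to be the main obstacle. The EoS \eqref{eq:EOS} involves $\rho_1h_1^0$ and a composition-dependent factor. The argument therefore needs either the simplified ideal EoS $p=(\gamma-1)e$, as in Proposition~\ref{prop:LF}, or a definition of $e$ that absorbs $\rho_1h_1^0$, with $\hat F^u_{\cdot,5}$ split accordingly. I would state this assumption explicitly and then conclude that $p$ stays constant. Finally, induction over $n$ together with the boundary treatment of Proposition~\ref{prop:LF} completes this part.

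\emph{Accuracy.} By \cite{NSARK}, each density MPE update is first order, and $w_{i,k}^{n+1}=1+\O(\dt)$. Substituting this into the $\ell$-equations recovers the explicit LLF update with an additional $\O(\dt^2)$ local error, exactly as in the proof of the previous theorem. Hence the scheme is first-order accurate.
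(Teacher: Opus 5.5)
Your route is the paper's own: decoupled MPE for each $\rho_k$, the identities $\hat F^{w,k}_{\cdot,4}=\bar u\hat F_{\cdot,k}$ and $\hat F^{w,k}_{\cdot,5}=\tfrac12\bar u^2\hat F_{\cdot,k}$, substitution of the summed density updates, and $w^{n+1}_{i,k}=1+\O(\dt)$. The positivity argument, the accuracy argument and the one-step momentum argument are fine.

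The gap is the step ``$p\equiv\bar p$, so that $e\equiv\bar e$ by the EoS''. With \eqref{eq:EOS}, at fixed pressure
\[
e=\rho E-\tfrac12\rho u^2=\rho_1h_1^0+\bar p\,\phi(\boldsymbol{\rho}),\qquad \phi(\boldsymbol{\rho})=\frac{\tfrac32\rho_1/M_1+\tfrac52\rho_2/M_2+\tfrac32\rho_3/M_3}{\sum_{s=1}^3\rho_s/M_s}.
\]
Both terms vary across a contact. Hence $\jump{e}\neq0$, the differences of $\hat F^u_{\cdot,5}$ do not cancel, and $(\rho E)^{n+1}_i-\tfrac12\bar u^2\rho^{n+1}_i=\bar e$ fails.

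Your second remedy only removes $\rho_1h_1^0$. That term is linear in $\rho_1$ and can indeed be moved into the species-1 weighted flux. The remaining part $\bar p\,\phi$ is homogeneous of degree zero and not affine in $(\rho_1,\rho_2,\rho_3)$, so no splitting can route it through the linearly weighted density fluxes. ``State the assumption and conclude'' therefore does not go through for that option. Moreover, constancy of $u$ at the next step needs $\hat F^u_{\cdot,4}=\mean{p}$ to be constant, so the induction breaks for $u$ as well.

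A better argument cannot fix this, because the claim itself fails for \eqref{eq:EOS} with variable composition, already for explicit LLF (formally $w\equiv1$). At constant $\bar u,\bar p$, every $\rho_k$ and $\rho E$ is updated as $q^{n+1}_i=\sum_jc_{ij}q^n_j$ with the same coefficients, $\sum_jc_{ij}=1$, since the $\bar u\bar p$ parts cancel. Therefore
\[
(\rho E)^{n+1}_i-\tfrac12\bar u^2\rho^{n+1}_i-h_1^0\rho^{n+1}_{1,i}=\bar p\sum_jc_{ij}\phi(\boldsymbol{\rho}^n_j),
\]
whereas $p^{n+1}_i=\bar p$ requires $\bar p\,\phi\bigl(\sum_jc_{ij}\boldsymbol{\rho}^n_j\bigr)$. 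Since $\phi$ is a ratio of two non-proportional linear forms, these differ in general. They agree, e.g., if $\phi$ or the temperature is constant on the stencil. This is the familiar pressure error of conservative multicomponent schemes at material interfaces.

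The extra hypothesis therefore belongs in the theorem, not in the proof. One choice is your first option: a single constant $\gamma$ with $p=(\gamma-1)e$. The other is spatially uniform mass fractions combined with the re-splitting of $\rho_1h_1^0$. In that case all $\b M_k$ coincide, so MPE keeps the mass fractions uniform, all $w_{i,k}$ agree, and $e-\rho_1h_1^0$ stays equal to the constant $\bar p\,\phi$.

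The paper's one-line proof (``similar to the previous one'') tacitly uses $e\equiv\bar e$ as well. So you have correctly sensed the weak point the paper skips, but your proposal does not close it.
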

\begin{proof}
The proof is similar to the previous one, given the linearity of the weighted numerical fluxes on the densities. Concerning the accuracy, we know from \cite{NSARK} that $w_{j,k}^{n+1}=1+\O(\dt)$, and hence, the method has the same accuracy as the underlying explicit scheme.
\end{proof}
\begin{remark}
In the second order case, we know that $w_{j,k}^{n+1}=1+\O(\dt^2)$. However, there will be also weights for the second stage of the RK scheme, \ie $w_{j,k}^{(2)}=\frac{U_{j,k}^{(2)}}{\pi_{j,k}^{(2)}}= 1 + \O(\dt)$ for the densities \cite[Theorem~18]{NSARK}. Furthermore, the same theorem in \cite{NSARK} then guarantees that the second-order in time will be maintained for the other quantities, and any high-order space discretization can be applied to obtain an overall second-order of accuracy. In what follows, we use linear reconstruction with a minmod limiter to obtain second order away from extrema and non-smooth regions. 
\end{remark}

\section{Numerical Results}\label{sec:experiments}
In this section, we will validate the presented numerical method with balanced fluxes comparing it with the corresponding explicit schemes as well as with MP schemes applied to densities and with MP schemes applied to density and total energy equations. In particular, we will use the forward Euler (FE) method and Heun's method as explicit methods of first and second order, respectively. The corresponding MP schemes will be denoted by MPE and MPHeun, where we apply the MP trick to the densities equations only, MPE-$\rho E$ and MPHeun-$\rho E$ for the versions where both density and total energy are treated with the MP trick, and MPE-s and MPHeun-s for the versions where the MP trick is applied to the density and the fluxes of momentum and energy are weighted consistently with the density MP trick as described in the previous section.

\subsection{Choice of the CFL}
Most of the numerical experiments we will perform, undergo severe discontinuities and stiff source terms, which makes the choice of the time step crucial for the stability of the method.
Except for the first smooth test case, we will set the CFL number on a reference solution using the largest timestep we can use without obtaining negative densities or pressure.
This will not guarantee that the simulation gives qualitatively good results, hence, we will introduce a \textit{safety factor} that will premultiply the CFL, in order to obtain reliable simulations.
The comparison between the methods will be then mainly on their computational costs at, more or less, the same level of accuracy and reliability.

\subsection{Convergence Test}
We start demonstrating that the claimed convergence can be observed in smooth regions. 
For this test case, we consider the Euler equations \eqref{eq:EulerEquations}-\eqref{eq:monoEuler} on $[0,1]$, with the initial conditions
\begin{equation}\label{eq:IC_smooth}
	(\rho,u,p)(0,x)=(1,1, 1+ \cos(0.5\pi x)^4)
\end{equation} 
and final time $\tend = 0.03$. 
We compute the error on the density and the experimental order of convergence (EOC) for various number of cells for the first and second-order schemes using the $L_1$ and $L_2$ norms and a CFL number of 0.5 for all schemes. To that end, we define a reference solution using the same method with double the amount of cells, respectively.
As can be seen in Table~\ref{tab:eoc1}, the claimed accuracy is achieved. We note that the experimental order of convergence is reduced using the $L_2$ norm, see Table~\ref{tab:eoc2} as we use a minmod limiter that locally reduces the order to one at extrema. This is enough to impact the EOC when computed with the $L_2$ norm, but not the $L_1$ norm.

\begin{table}[!htbp]
	\caption{EOC for Euler equations \eqref{eq:EulerEquations}-\eqref{eq:monoEuler} with IC \eqref{eq:IC_smooth} using $L_1$ norm on $\rho$}\label{tab:eoc1}
	\centering
	\begin{tabular}{l|llll|llll}
		\hline
		& \multicolumn{4}{|c|}{First order}& \multicolumn{4}{c}{Second order}\\
		\hline
		$N$ \qquad\qquad& FE & MPE-$\rho E$ & MPE & MPE-s & Heun & MPHeun-$\rho E$ & MPHeun & MPHeun-s \\ 
		\hline 
		160  & 0.996 & 0.984 & 0.983 & 0.982 & 1.908 & 1.905 & 1.907 & 1.901 \\
		320  & 0.996 & 0.998 & 0.998 & 0.997 & 1.948 & 1.950 & 1.950 & 1.944 \\
		640  & 1.000 & 1.000 & 1.000 & 0.999 & 1.957 & 1.957 & 1.960 & 1.955 \\
		1280 & 1.000 & 1.001 & 1.001 & 1.000 & 1.966 & 1.966 & 1.968 & 1.965 \\
		2560 & 1.000 & 1.000 & 1.000 & 1.000 & 1.971 & 1.972 & 1.971 & 1.969 \\ \hline 
	\end{tabular}
\end{table}
\begin{table}[h]
	\centering
	\caption{EOC for Euler equations \eqref{eq:EulerEquations}-\eqref{eq:monoEuler} with IC \eqref{eq:IC_smooth}  using $L_2$ norm  on $\rho$}\label{tab:eoc2}
	\begin{tabular}{l|llll|llll}
		\hline
		& \multicolumn{4}{|c|}{First order}& \multicolumn{4}{c}{Second order}\\
		\hline
		$N$ \qquad\qquad& FE &  MPE-$\rho E$ &MPE & MPE-s & Heun & MPHeun-$\rho E$ & MPHeun & MPHeun-s \\ 
		\hline 
		160  & 0.989 & 0.974 & 0.973 & 0.970 & 1.715 & 1.708 & 1.709 & 1.709 \\
		320  & 0.988 & 0.990 & 0.989 & 0.987 & 1.716 & 1.721 & 1.722 & 1.710 \\
		640  & 0.992 & 0.994 & 0.994 & 0.992 & 1.704 & 1.714 & 1.713 & 1.700 \\
		1280 & 0.995 & 0.996 & 0.996 & 0.995 & 1.690 & 1.697 & 1.694 & 1.686 \\
		2560 & 0.997 & 0.997 & 0.997 & 0.996 & 1.678 & 1.683 & 1.680 & 1.675 \\
		\hline 
	\end{tabular}
\end{table}

\subsection{Reactive Euler Equations}
For the reactive Euler equations as a special case of the multi-species Euler equations, we consider the Riemann problem \begin{equation}
U_i(0,x)=\begin{cases}U_{i,L},& x<0,\\
U_{i,R},& x\geq 0,
\end{cases}
\end{equation}
defined by \cite{WSYS2009,SSPMPRK2}
 \[\begin{pmatrix} \rho_{1,L}\\ \rho_{2,L}\\  \rho_{3,L}\\v_L \\p_L\end{pmatrix}=\begin{pmatrix}5.251896311257205\cdot 10^{-5}\\ 3.748071704863518\cdot 10^{-5}\\ 2.962489471973072\cdot 10^{-4} \\0\\ 10^3\end{pmatrix}, \begin{pmatrix} \rho_{1,R}\\ \rho_{2,R}\\ \rho_{3,R}\\v_R \\p_R\end{pmatrix}=\begin{pmatrix} 8.341661837019181\cdot 10^{-8}\\ 9.45418692098664\cdot 10^{-11} \\2.748909430004963\cdot 10^{-7} \\0 \\1\end{pmatrix}\]
  together with homogeneous Neumann boundary conditions, and final time $\tend = 10^{-4}$. 
We will use $\delta =10^4$ as the reaction rate, which makes the problem very stiff.

We use $N=4000$ cells on the interval $[a,b]=[-1,1]$, \ie $\dx=\frac{2}{4000}$. 
The first-order schemes and second-order schemes produce similar approximations, respectively. 
However, due to the stiffness of the problem, the time step restrictions to have stability for the explicit schemes are very severe, see Table~\ref{tab:react_euler}. On the other hand, the classical MPRK schemes have weaker constraints on the time-step (around $\text{CFL}\sim 0.15$), while the flux-balanced version of Heun's method can run with incredibly high $\text{CFL}\sim 0.86$, leading to by far the best performance among the schemes. 

\begin{table}\caption{Comparison of numerical schemes applied to multi-species Euler equations \eqref{eq:EulerEquations}-\eqref{eq:multiEuler} with source \eqref{eq:source} with $\delta=10^4$ with $N=4000$ cells: stable CFL and runtime in seconds}\label{tab:react_euler}
\begin{tabular}{|llll|llll|}\hline
	\multicolumn{4}{|c|}{First order}&\multicolumn{4}{c|}{Second order} \\\hline
	Method & CFL & safety factor & Runtime & Method & CFL & safety factor & Runtime \\
	\hline 
	FE & 0.008 & 1 & 1494.76           & Heun & 0.008 & 1 &  5185.29 \\  
	MPE-$\rho E$ & 0.21 & 0.4 & 325.18     & MPHeun-$\rho E$ & 0.11 & 0.4 & 1960.50  \\ 
	MPE & 0.18 & 0.4 &  371.04    & MPHeun & 0.12 & 0.4 &   1671.91   \\  
	MPE-s & 0.19 & 0.4 & 470.39 			   & MPHeun-s  & 0.86 & 1 &  121.29 \\  
	\hline 
\end{tabular}

\end{table}
\begin{figure}
	\includegraphics[width=0.49\textwidth]{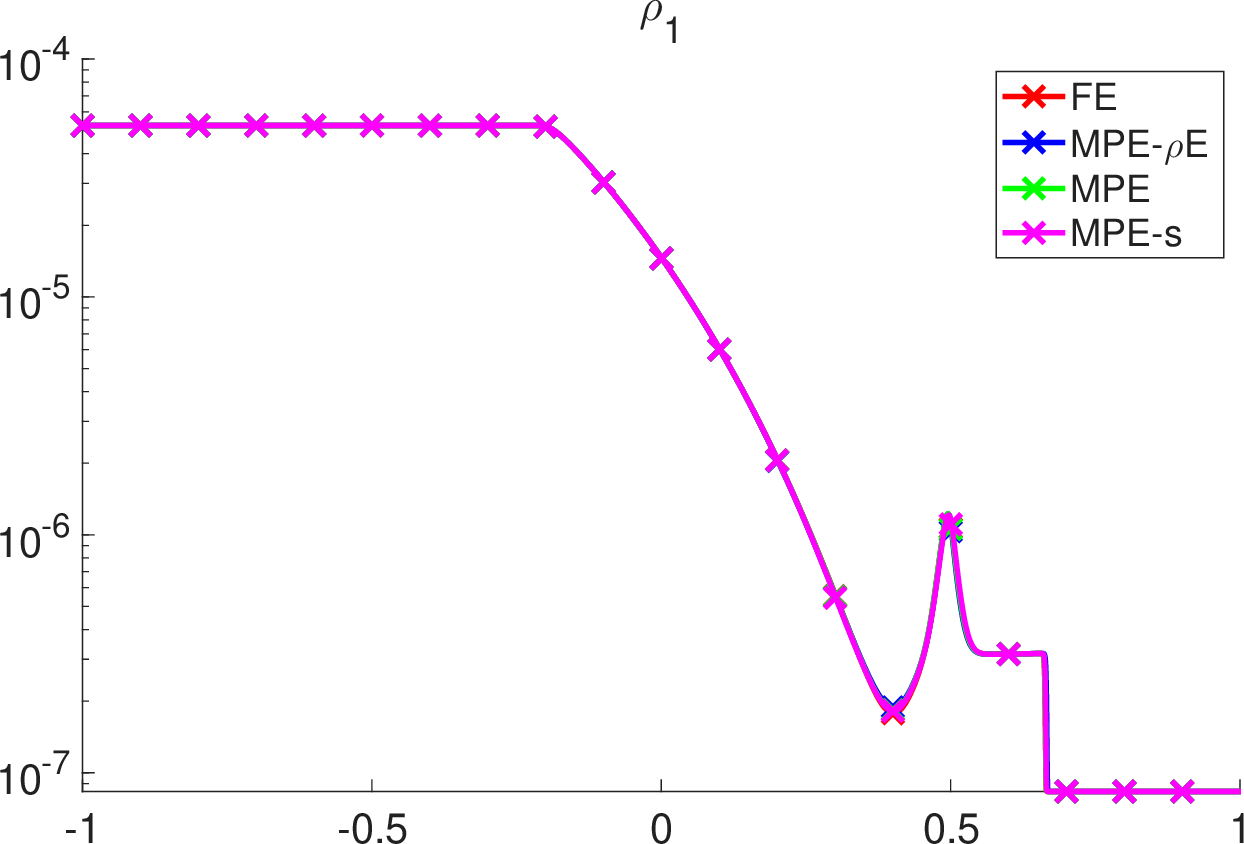}
\hfill
	\includegraphics[width=0.49\textwidth]{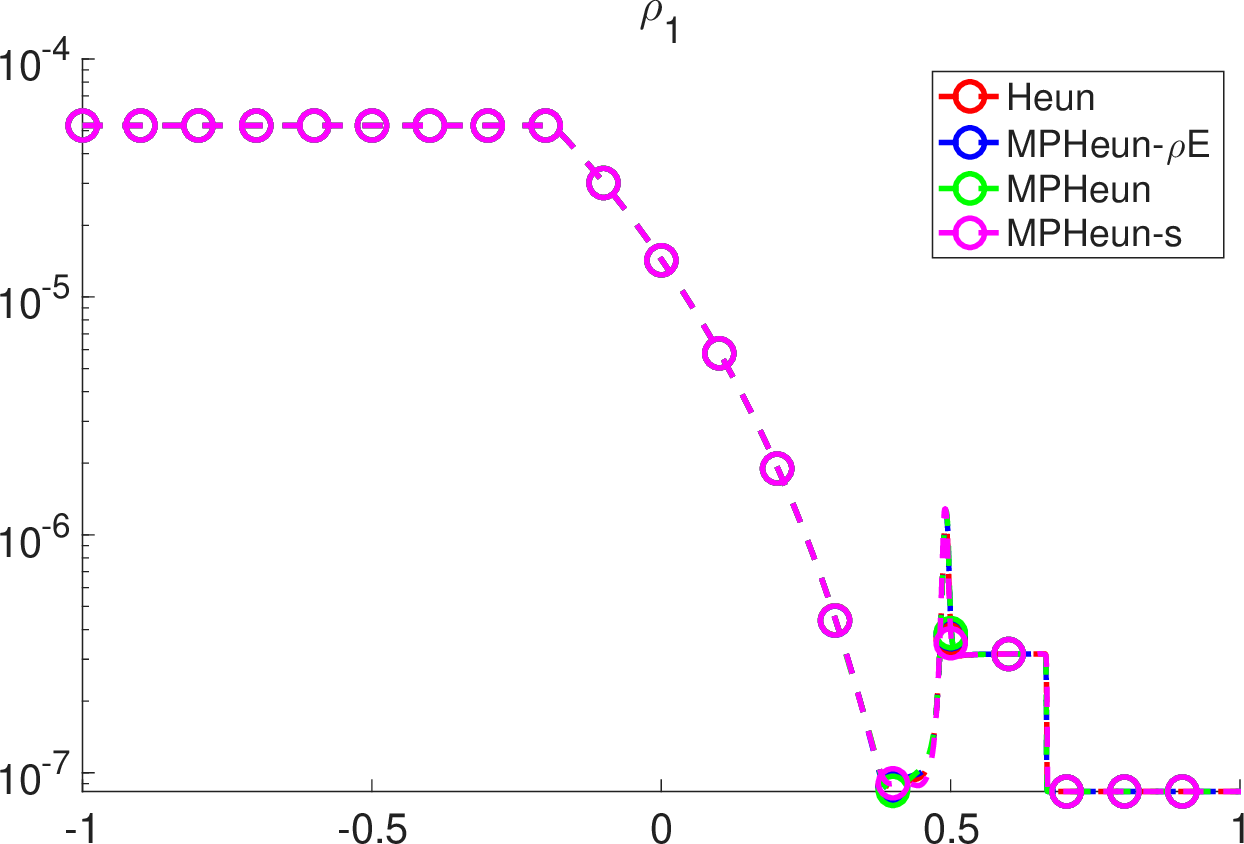}
\\[3mm]
	\includegraphics[width=0.49\textwidth]{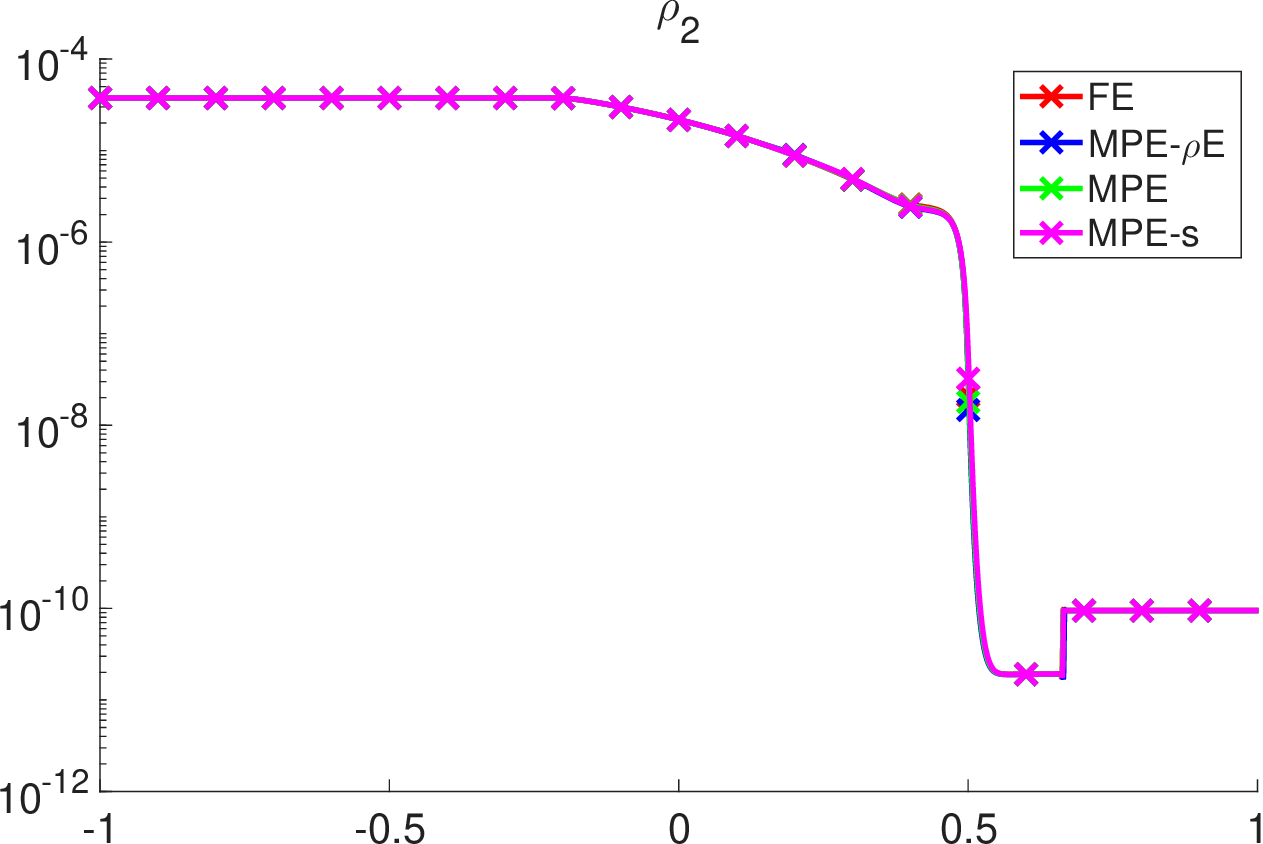}
\hfill
	\includegraphics[width=0.49\textwidth]{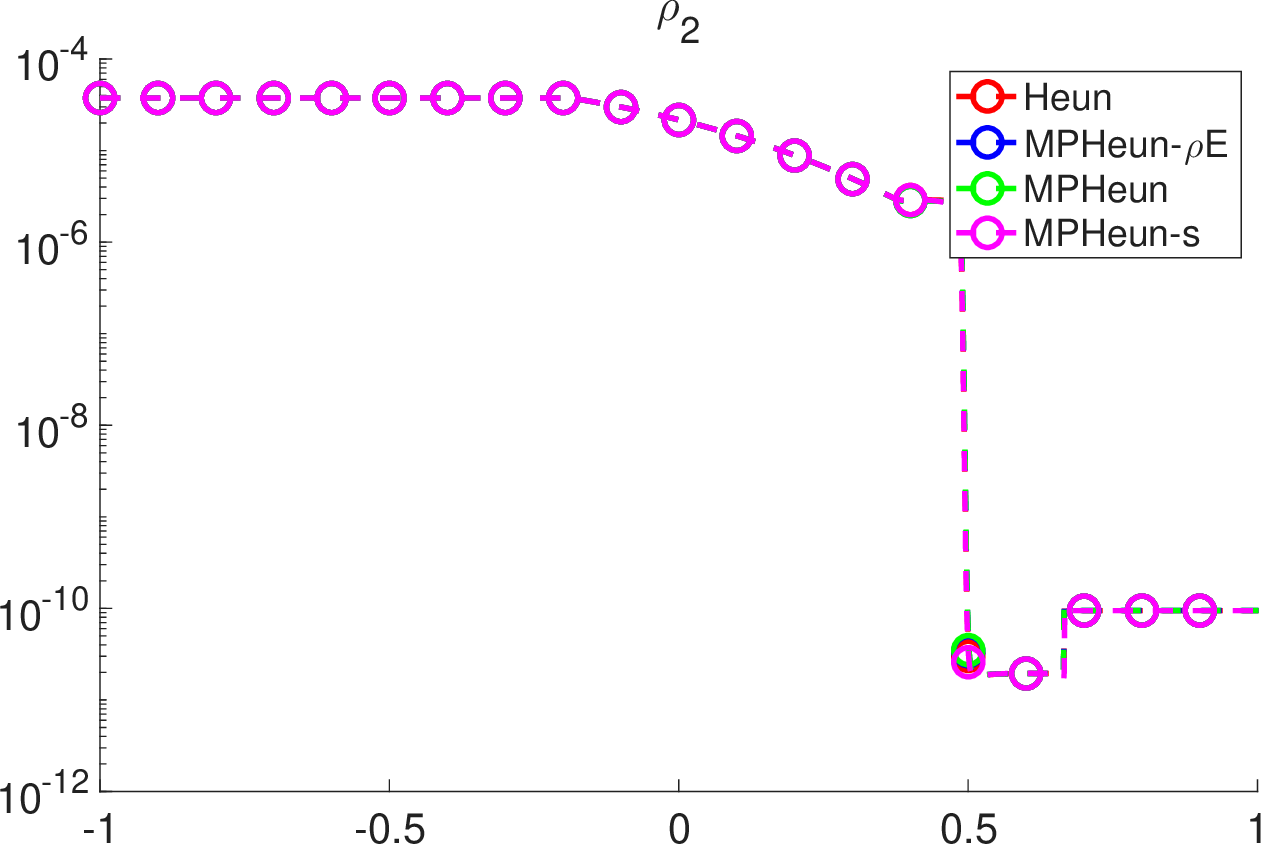}
\\[3mm]
	\includegraphics[width=0.49\textwidth]{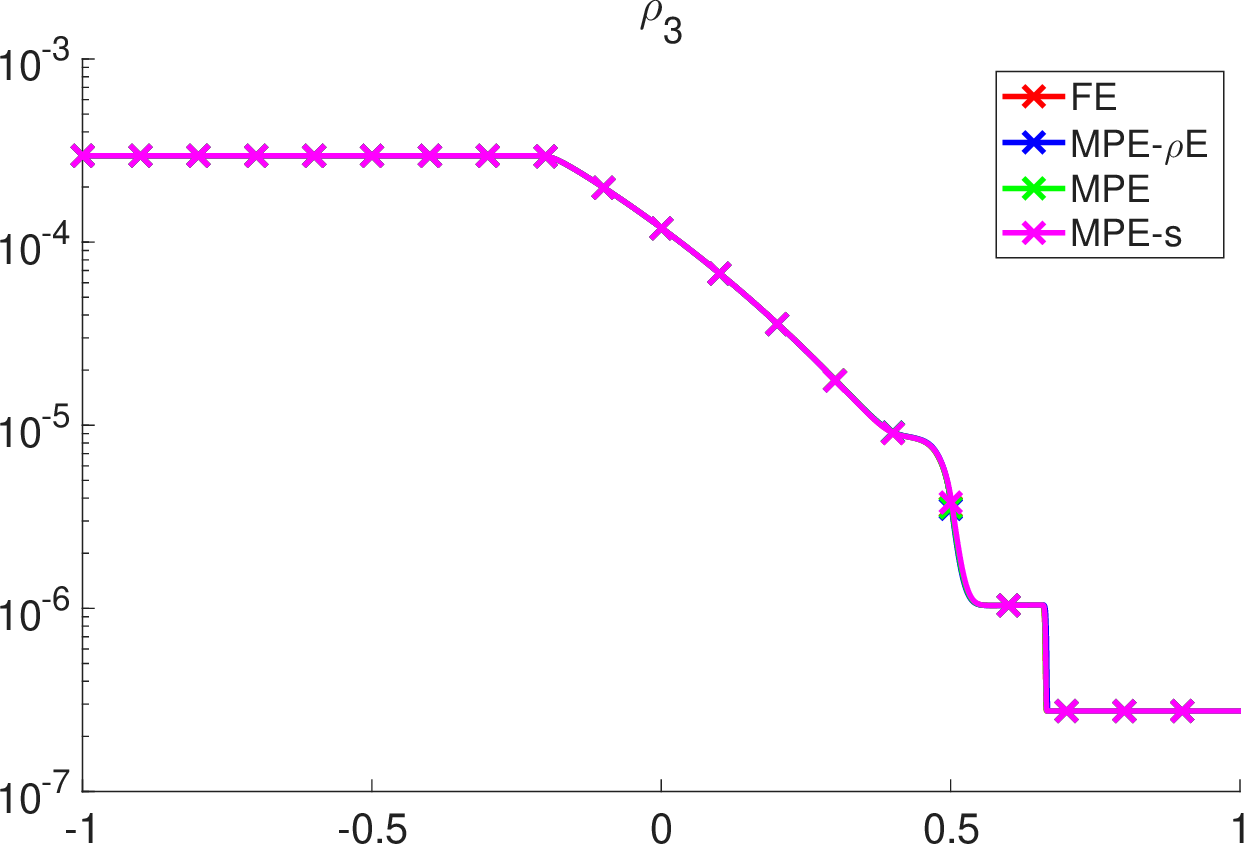}
\hfill
	\includegraphics[width=0.49\textwidth]{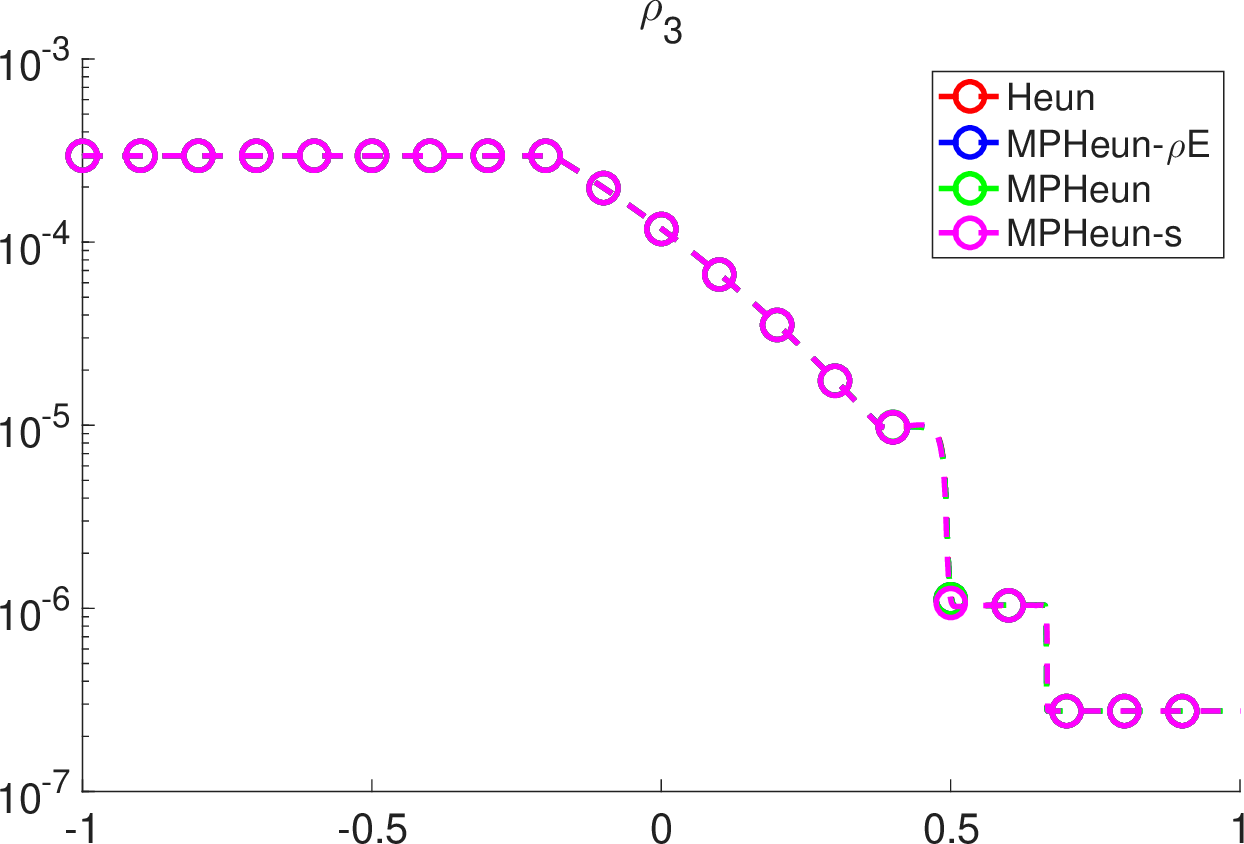}
\\[3mm]
	\includegraphics[width=0.49\textwidth]{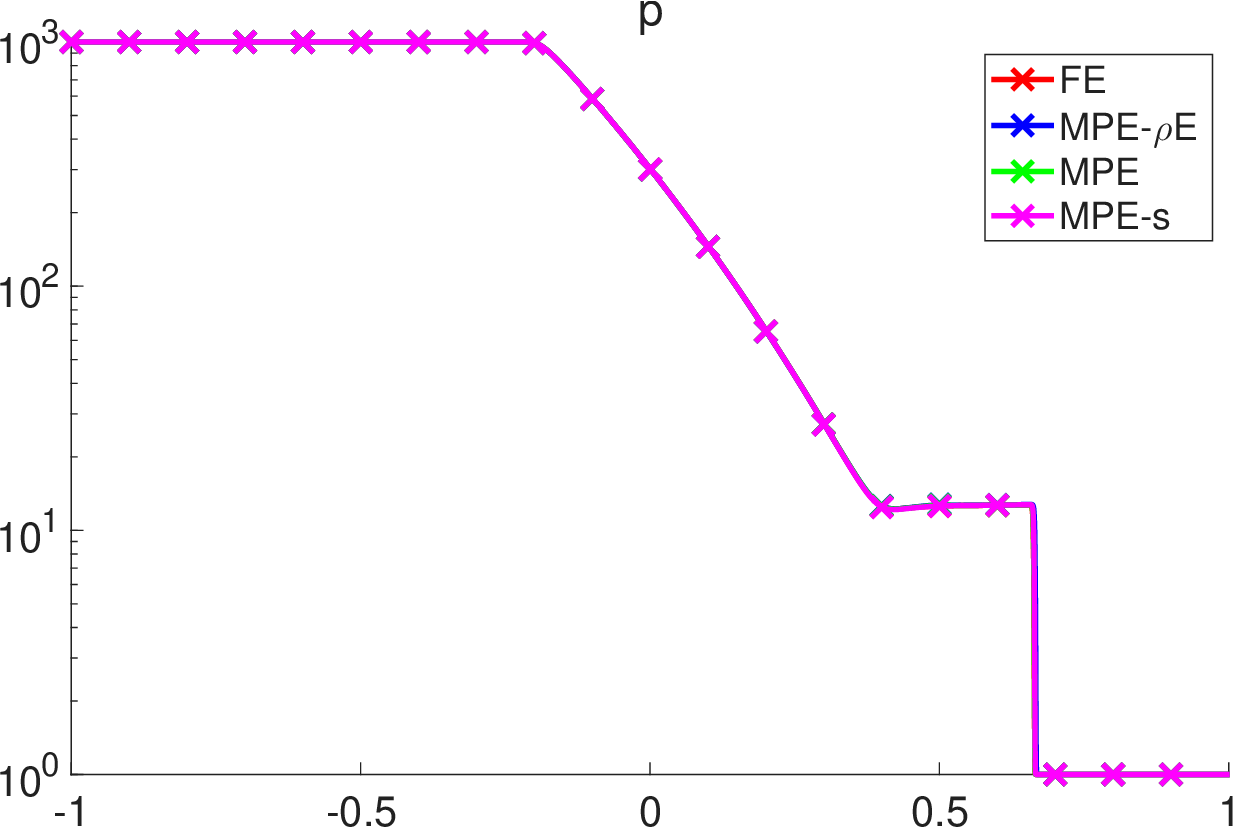}
\hfill
	\includegraphics[width=0.49\textwidth]{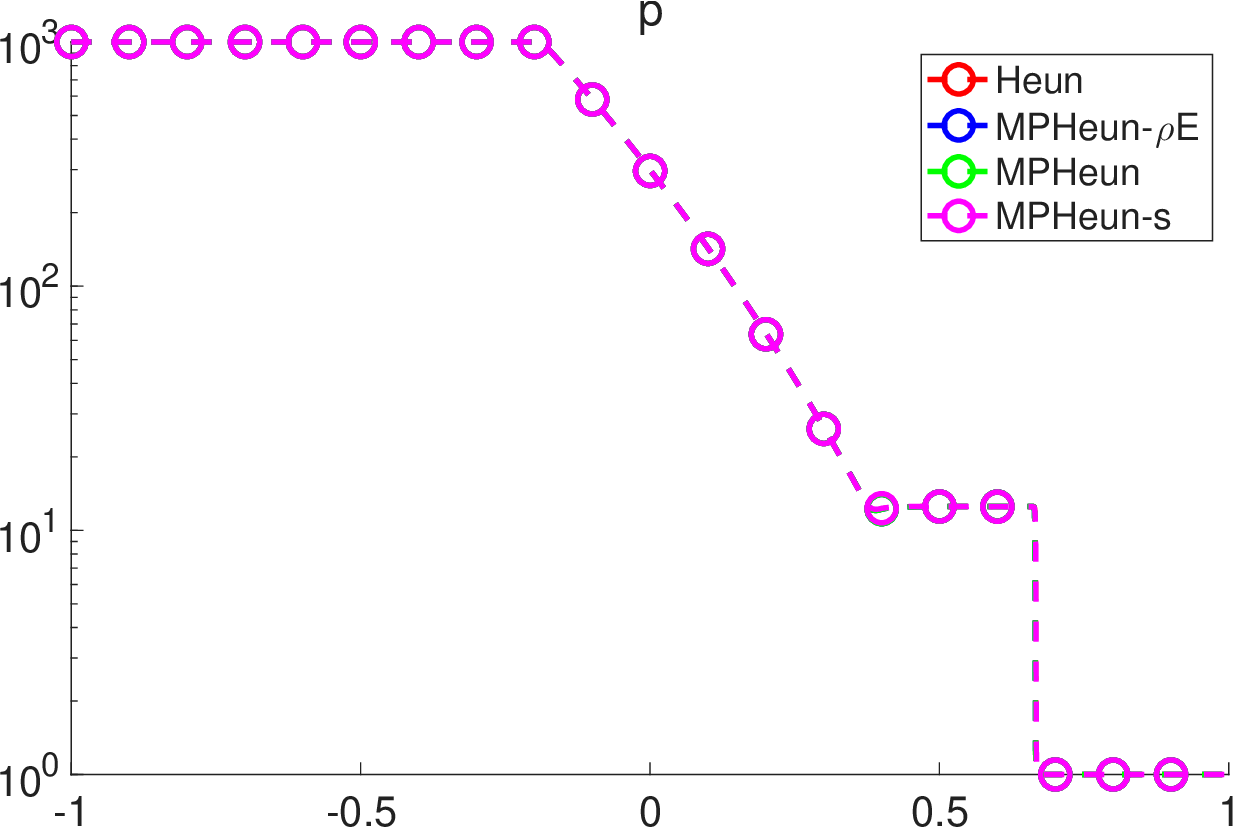}
\caption{Plots of the numerical solution of multi-species Euler equations \eqref{eq:EulerEquations}-\eqref{eq:multiEuler} with source \eqref{eq:source} using $\delta=10^4$, $N=4000$ cells as well as a CFL and safety factor as given in Table~\ref{tab:react_euler}. First-order methods are at the left,  second-order methods are at the right.}\label{fig:react_euler}
\end{figure}
As shown in Table~\ref{tab:react_euler}, the source terms are not directly included in the computation of the time step $\Delta t$. 
Instead, we select the maximal $\Delta t$ based only on the fluxes and the CFL number.
We choose a CFL that ensures the positivity of $\rho_i$, $p$, and $\rho E$. 
The CFL values in Table~\ref{tab:react_euler} are "maximal" in the sense that increasing the last significant digit results in negative densities, pressure, or total energy. 
We then run the simulations with that CFL number multiplied with a \textit{safety factor} specified in Table~\ref{tab:react_euler} to suppress spurious oscillations. 
We note that for the explicit schemes as well as for the second-order flux-balanced scheme we used a \textit{safety factor} of 1, whereas the other schemes have a \textit{safety factor} of 0.4 to mitigate the development of oscillations near the rarefaction wave. 

Looking at the runtime in Table~\ref{tab:react_euler}, even if the MPRK schemes have to solve many linear systems, they are favorable over the explicit schemes as they are faster. 
However, the second-order flux-balanced version with a runtime of around 2 minutes is the best performing scheme while the other second-order schemes range from 25 minutes to almost 90 minutes.

Despite these differences in computational speed, the numerical approximations are nearly indistinguishable in the visualizations (see Figure~\ref{fig:react_euler}) for the same order of accuracy. We notice better approximations of the contact discontinuity for all second order methods. 


\subsection{Contact Discontinuity}
\begin{table}\caption{Comparison of numerical schemes applied to one species Euler equations \eqref{eq:EulerEquations}-\eqref{eq:monoEuler} with IC \eqref{eq:cont_disc}  (Contact Discontinuity) and $N=1000$: stable CFL and runtime  in seconds}\label{tab:cont_disc}
\begin{tabular}{|llll|llll|}\hline
	\multicolumn{4}{|c|}{First order}&\multicolumn{4}{c|}{Second order} \\\hline
	Method & CFL & safety factor & Runtime & Method & CFL & safety factor & Runtime \\
	\hline 
	FE & 1 & 0.7 & 5.12			       & Heun            & 1 & 0.7 & 8.88  \\  
	MPE-$\rho E$ & 0.28 & 0.7 & 15.79    & MPHeun-$\rho E$ & 0.39 & 0.7 & 29.82  \\ 
	MPE & 0.38 & 0.7 & 10.70             & MPHeun          & 0.42 & 0.7 & 10.70   \\  
	MPE-s & 1 & 0.7 & 6.95 			   & MPHeun-s        & 1 & 0.7 & 17.60  \\  
	\hline 
\end{tabular}
\end{table}
\begin{figure}
		\includegraphics[width=0.49\textwidth]{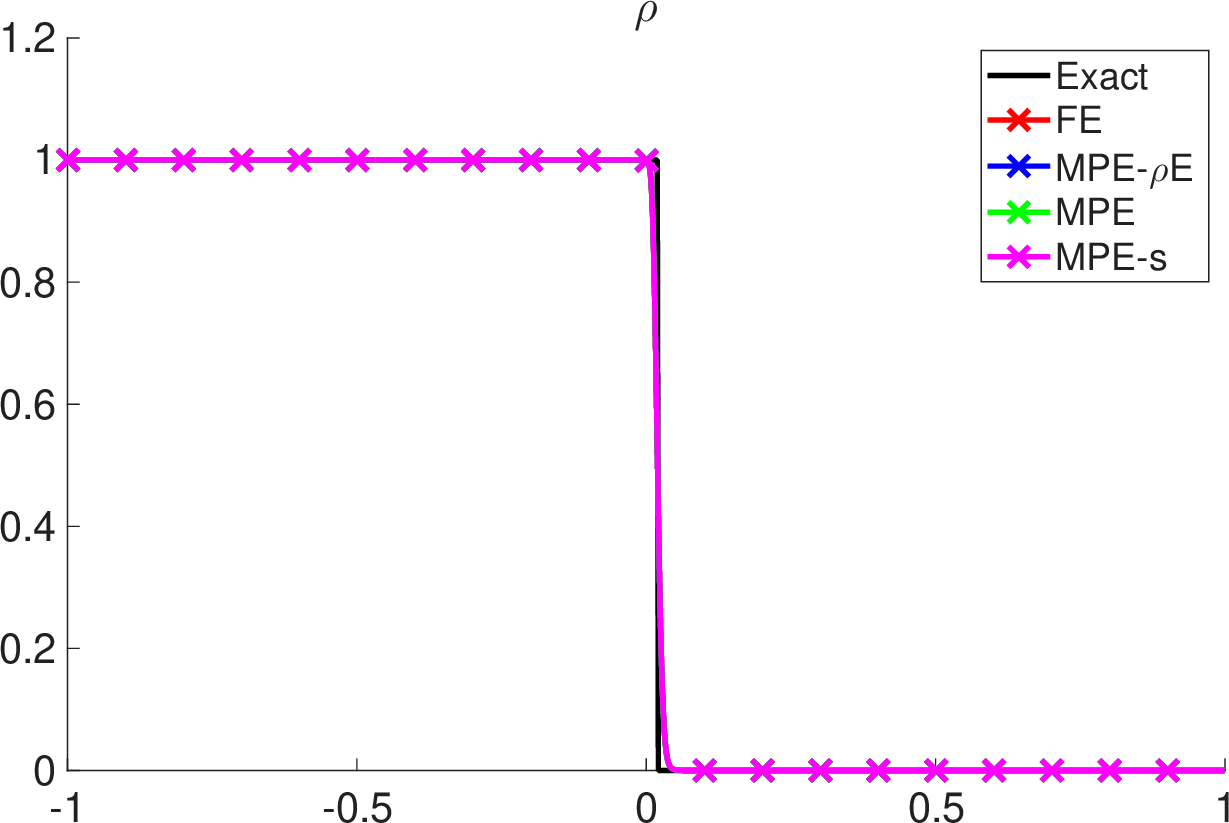}
	\hfill
		\includegraphics[width=0.49\textwidth]{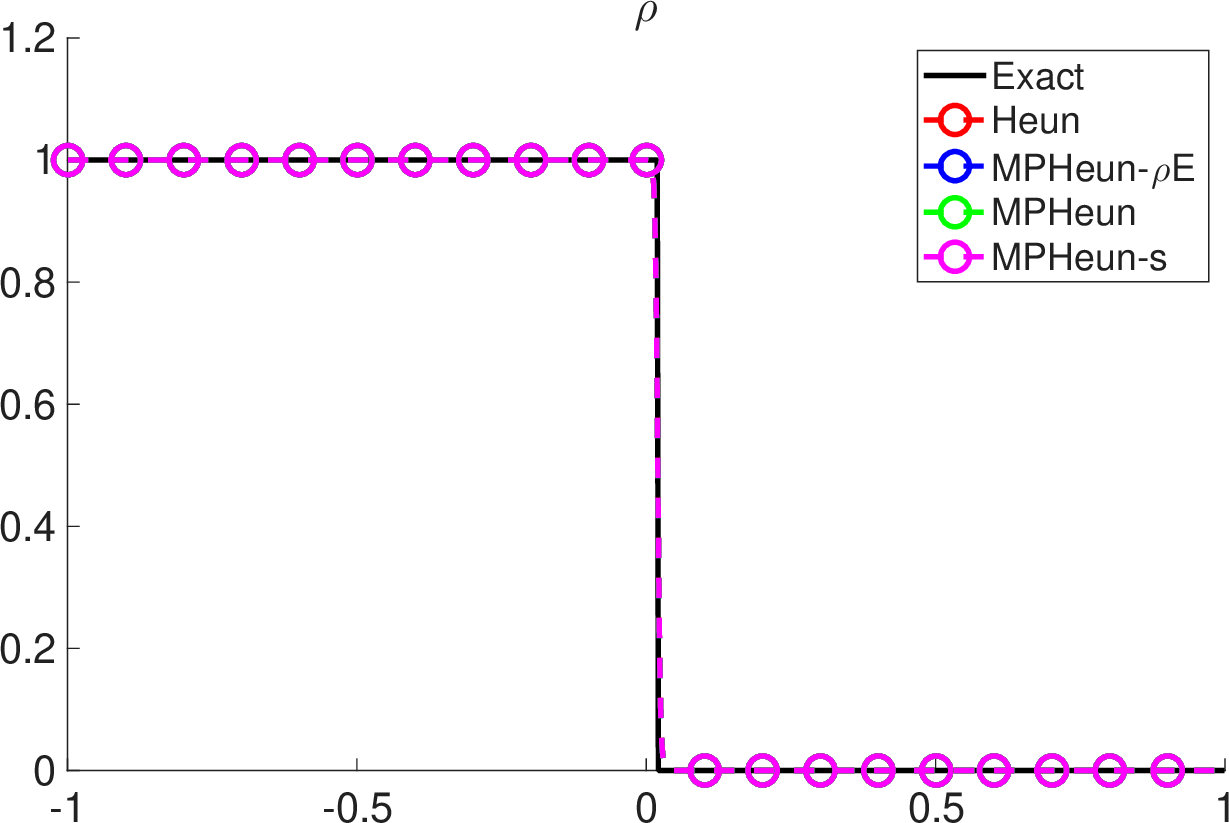}
	\\[3mm]
		\includegraphics[width=0.49\textwidth]{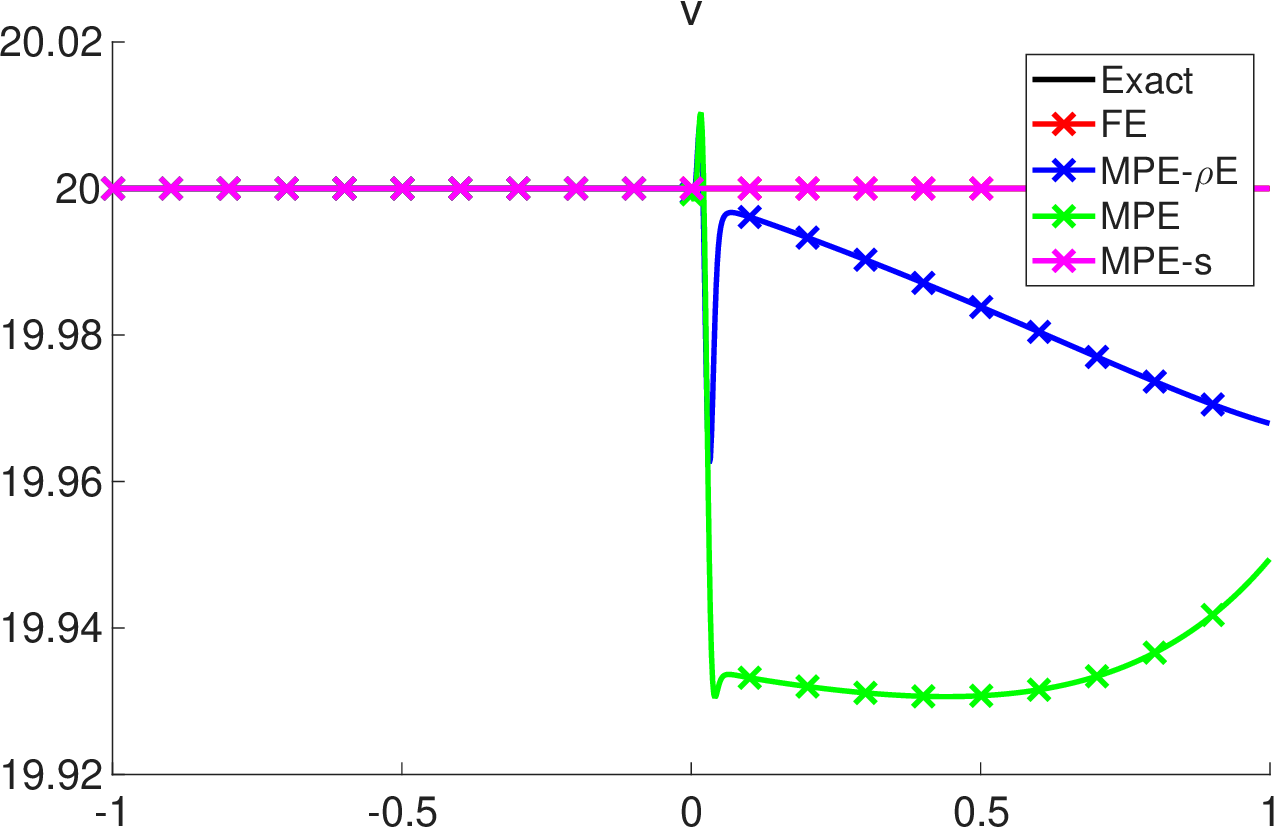}
		\hfill
	\includegraphics[width=0.49\textwidth]{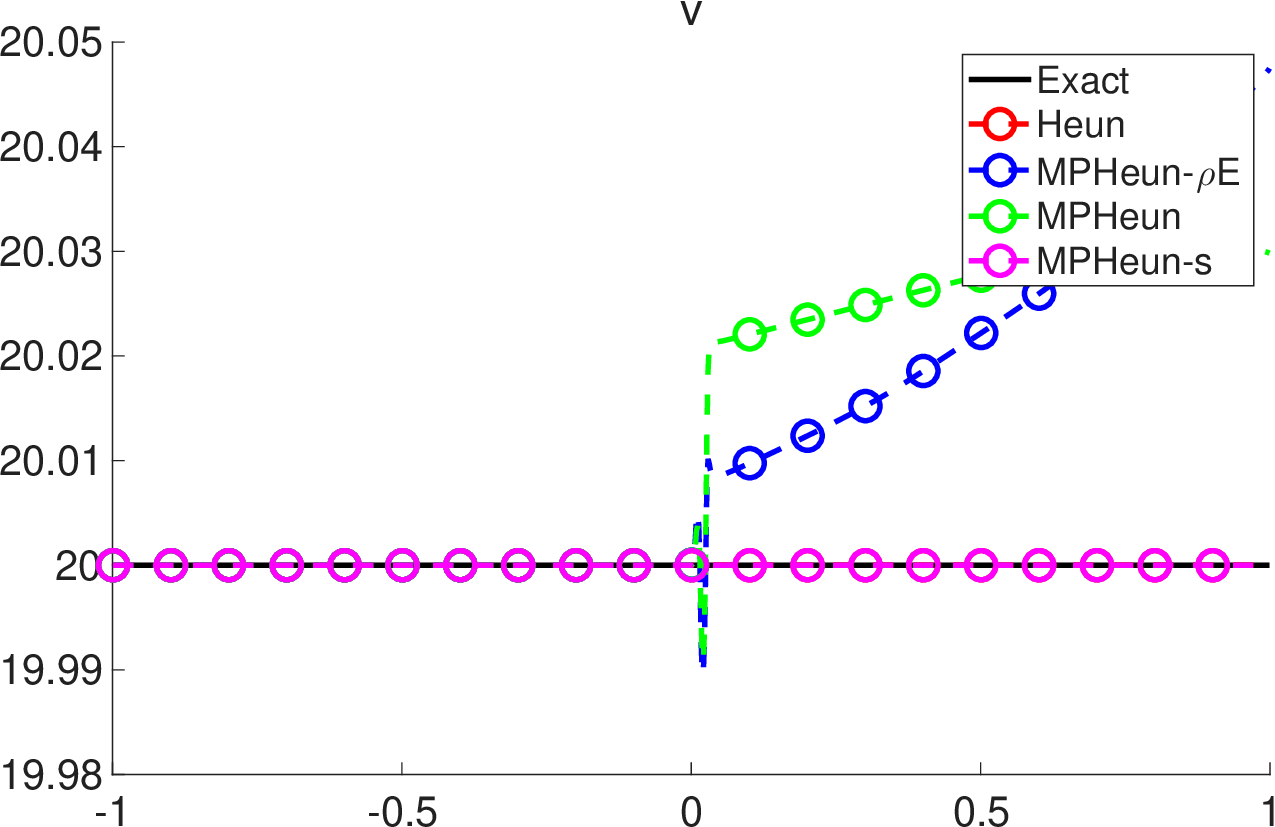}
	\\[3mm]
		\includegraphics[width=0.49\textwidth]{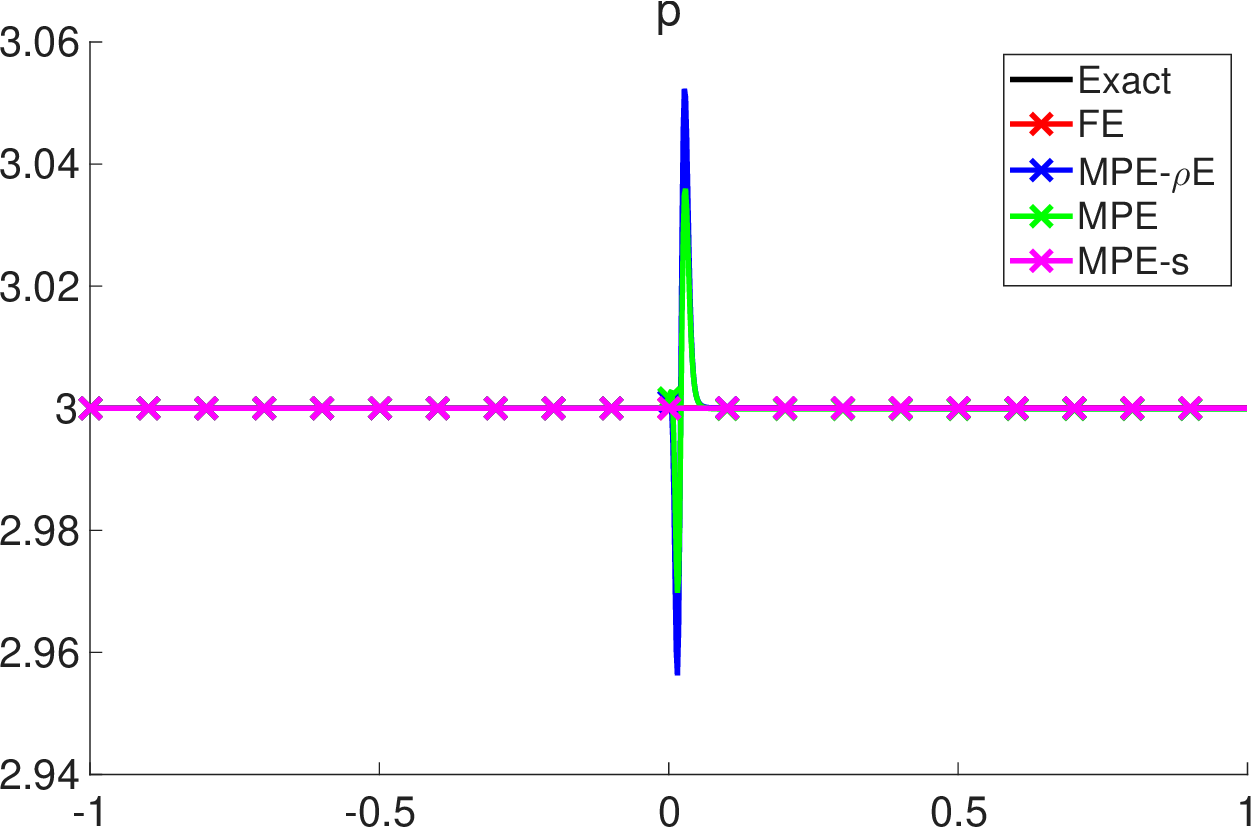}
	\hfill 	
		\includegraphics[width=0.49\textwidth]{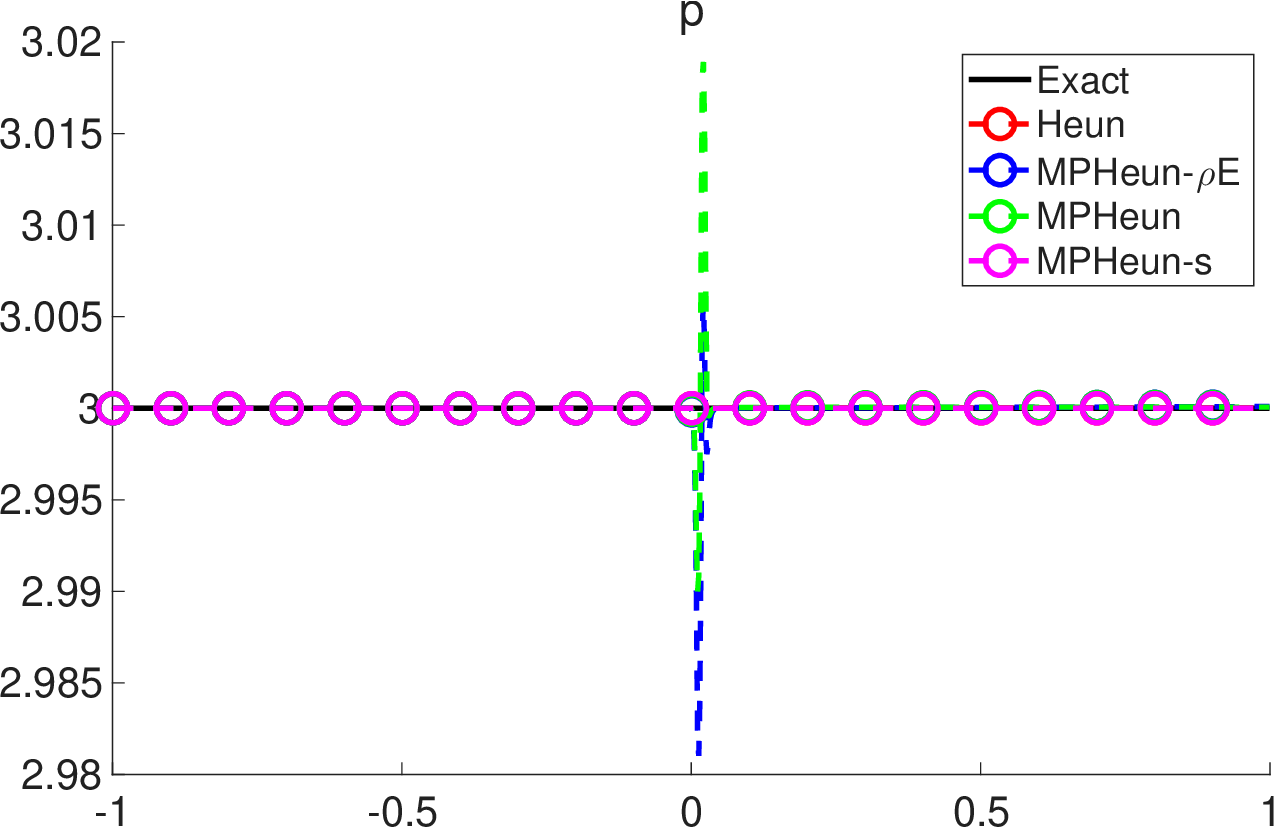}
	
	\caption{Plots of the numerical solution of Euler equations \eqref{eq:EulerEquations}-\eqref{eq:monoEuler} with contact discontinuity IC  \eqref{eq:cont_disc} with $N=1000$ using a CFL and safety factor given in Table~\ref{tab:cont_disc}. First-order methods are left,  second-order methods are on the right.}\label{fig:cont_disc}
\end{figure}
The contact discontinuity test case, we will consider, is a variation of the 1-2-3 Problem \cite{einfeldt1991godunov,toro2013riemann}, where the density jump and the velocity are much larger. It is a Riemann problem for the Euler equations with 
\begin{equation}\label{eq:cont_disc}
\b U_L=\begin{pmatrix} 1\\20\\3\end{pmatrix},\quad \b U_R=\begin{pmatrix} 10^{-6}\\ 20\\3\end{pmatrix}\end{equation}
as initial condition for the primitive variables $(\rho,u,p)$.

The advantage of the flux-balanced version over the classical MPRK schemes becomes evident when comparing the schemes to the test case with the contact discontinuity, \ie \eqref{eq:EulerEquations}-\eqref{eq:monoEuler}, with \eqref{eq:cont_disc}. Here, the classical MPRK schemes failed at maintaining constant velocity and pressure, see Figure~\ref{fig:cont_disc}.
In contrast, the flux-balanced version was able to preserve the constant states.  
Comparing the first-order schemes, we note that the LLF scheme is positivity-preserving for CFL=0.5, which means that the Patankar-trick is not needed for this test case. 

\subsection{Vacuum}
Finally, we challenge the flux-balanced version with a test leading to a vacuum.
It is another Riemann problem with a double rarefaction wave forming from the center. The Riemann problem is defined as 
\begin{equation}\label{eq:vacuum}
\b U_L=\begin{pmatrix} 1\\-20\\0.4\end{pmatrix},\quad \b U_R=\begin{pmatrix} 1\\ 20\\0.4\end{pmatrix}\end{equation}
in primitive variables. The final time is $\tend=0.03$. To obtain the exact solution, we use the exact Riemann solver for the Euler equations \cite{toro2013riemann} available at \cite{repoRPsolver}.
In Table~\ref{tab:vacuum}, we see that the flux-balanced version is comparable to the explicit variants in terms of runtime, but it is much faster than the MPRK schemes. 
Additionally, they show similar numerical results, see Figure~\ref{fig:vacuum}, while the MPRK schemes show nonphysical oscillations, which resulted in negative approximations for the pressure with a CFL number greater or equal to 0.02.
\begin{remark}
 Using a CFL number of 0.5 and setting $\rho_R=10^{-2}$, we observe that the flux-balanced MPE scheme yields a negative pressure after one time step. Hence, more research is needed in this field to provably control the pressure.
 \end{remark}
 \begin{table}\caption{Comparison of numerical schemes applied to Euler equations \eqref{eq:EulerEquations}-\eqref{eq:monoEuler} with IC \eqref{eq:vacuum} (Vacuum) and $N=1000$: stable CFL and runtime in seconds}\label{tab:vacuum}
\begin{tabular}{|llll|llll|}\hline
	\multicolumn{4}{|c|}{First order}&\multicolumn{4}{c|}{Second order} \\\hline
	Method & CFL & safety factor & Runtime & Method & CFL & safety factor & Runtime \\
	\hline 
	FE & 1 & 0.7 & 2.08	 & Heun & 1 & 0.7 & 2.51  \\  
	MPE-$\rho E$ & 0.01 & 0.7 & 141.69     & MPHeun-$\rho E$ & 0.10 & 0.7 & 32.14  \\ 
	MPE & 0.01 & 0.7 & 131.54      & MPHeun & 0.13 & 0.7 & 24.83   \\  
	MPE-s & 1 & 0.7 & 2.18 			   & MPHeun-s  & 1 & 0.7 & 5.24 \\  
	\hline 
\end{tabular}
 \end{table}
  \begin{figure}
 	\includegraphics[width=0.49\textwidth]{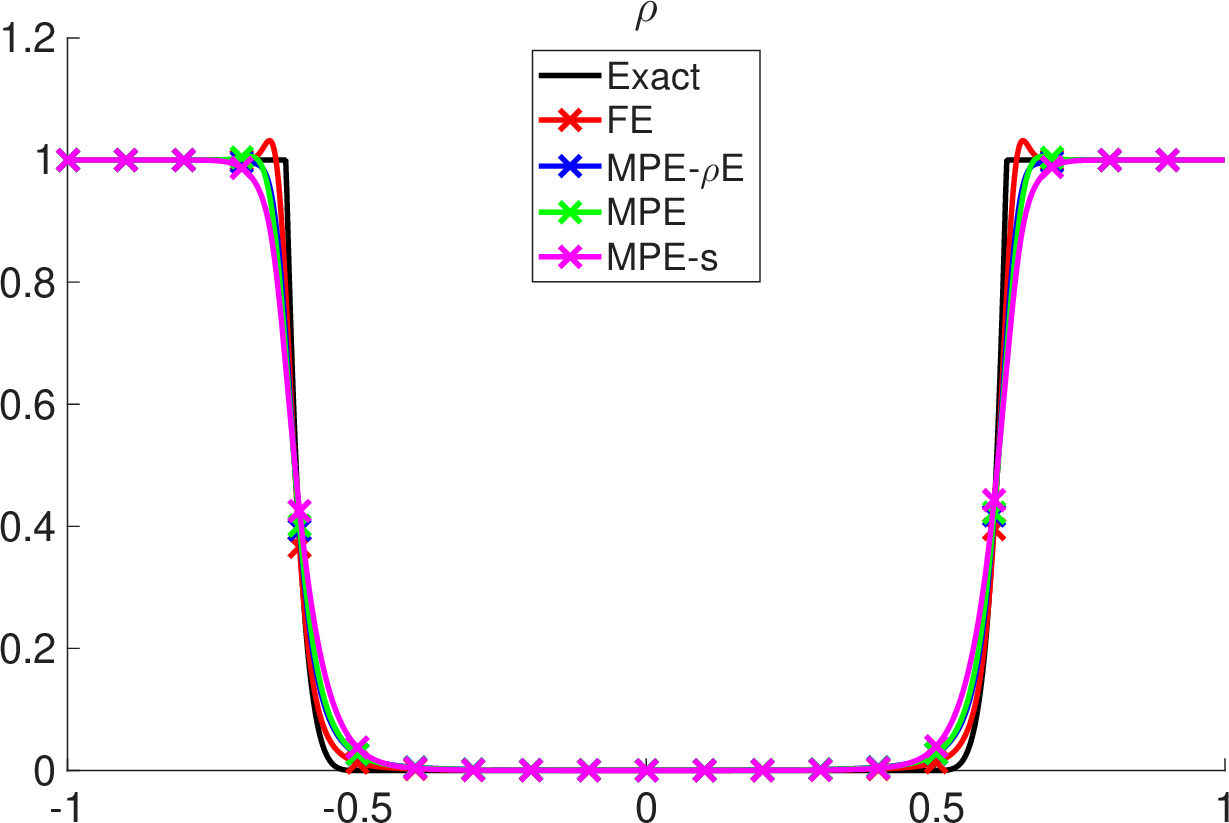}
 \hfill
 		\includegraphics[width=0.49\textwidth]{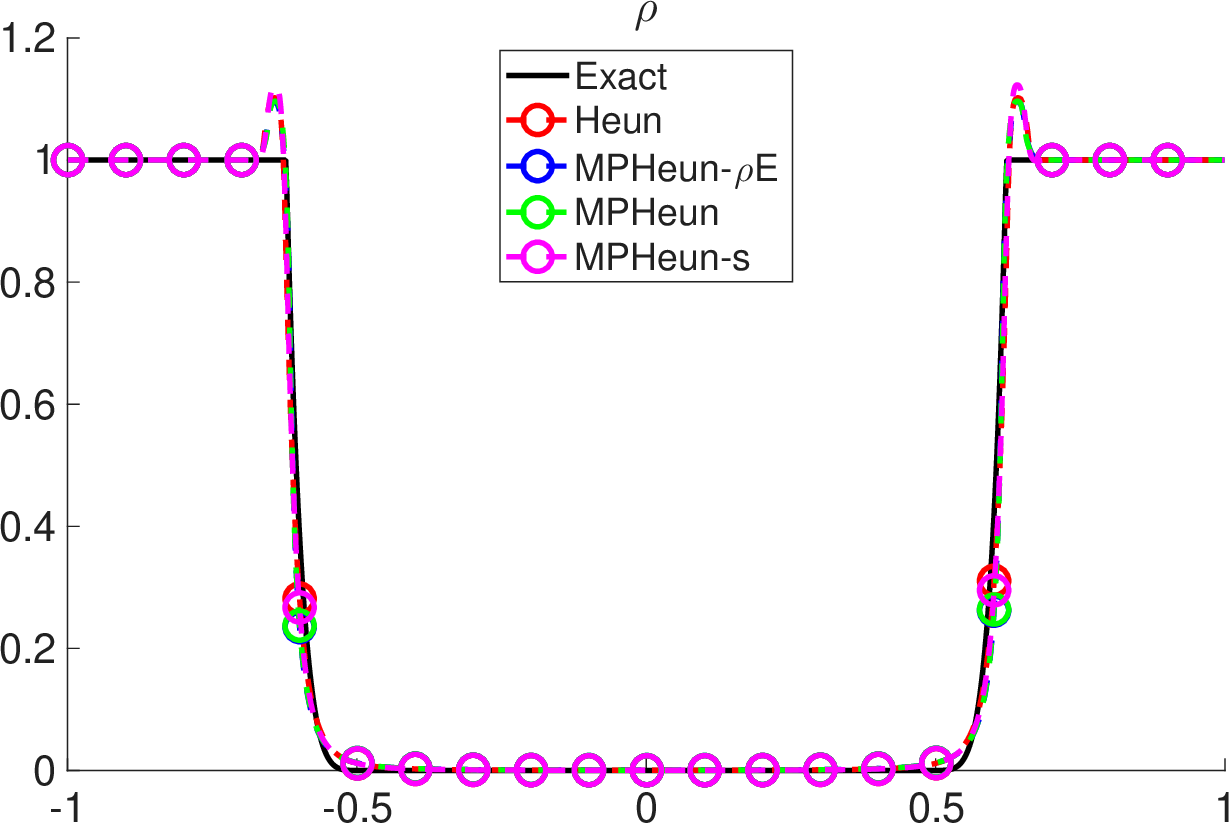}
 	\\
 	\includegraphics[width=0.49\textwidth]{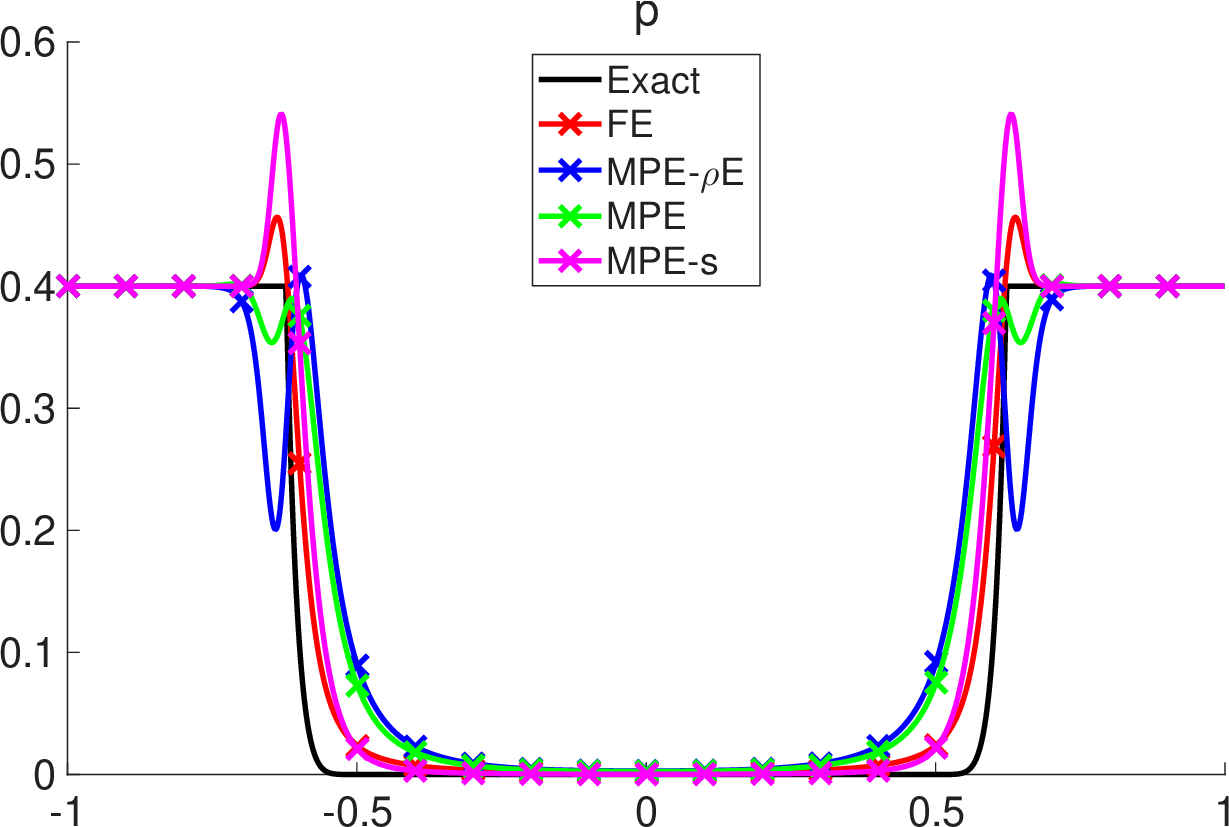}
 \hfill 	
 		\includegraphics[width=0.49\textwidth]{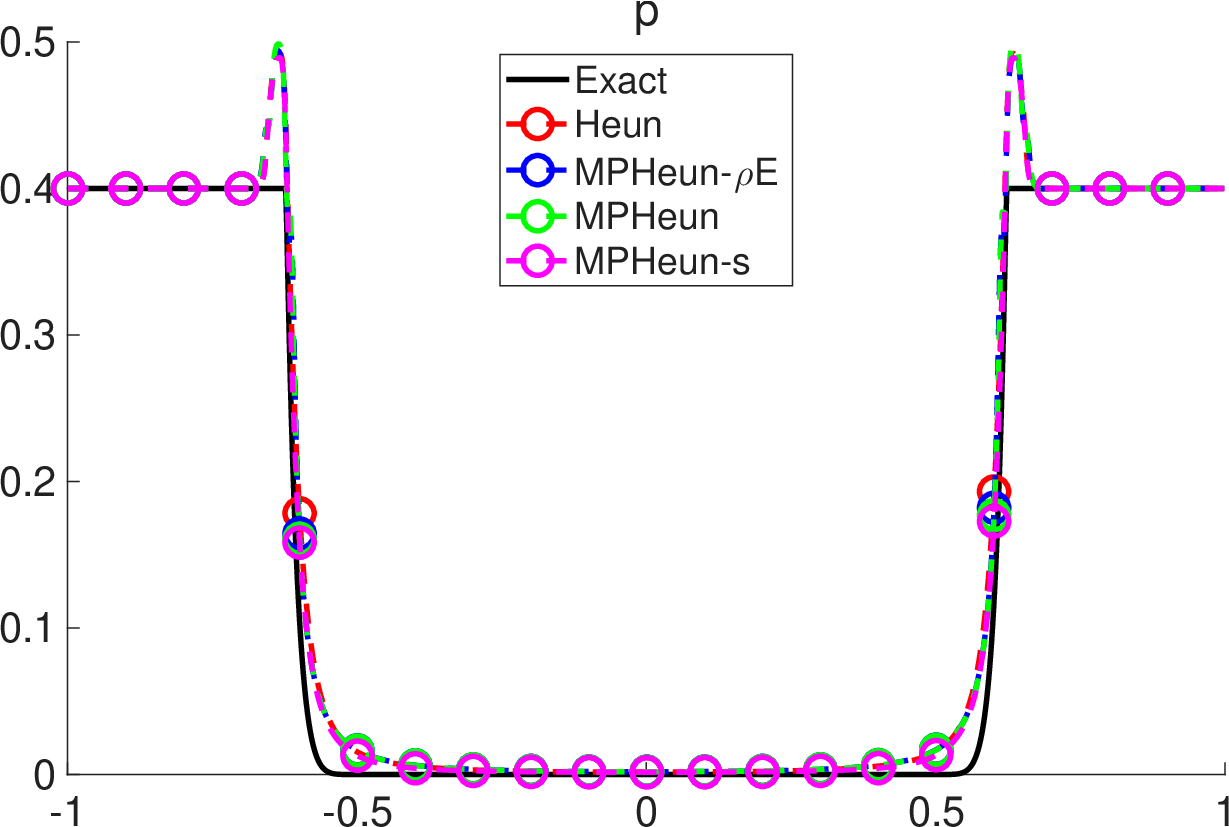}
 	 	\caption{Plots of the numerical solution of Euler equations \eqref{eq:EulerEquations}-\eqref{eq:monoEuler} with vacuum IC \eqref{eq:vacuum} with $N=1000$, and with CFL and safety factor given in Table~\ref{tab:vacuum}. First-order methods are left,  second-order methods are on the right.}\label{fig:vacuum}
 \end{figure}

\section{Summary and Outlook}\label{sec:summary}
In this work, we showed how to embed an MPRK scheme into a Finite Volume framework for Euler equations of gas dynamics, also taking stiff source terms into consideration. Furthermore, we showed that the MP-trick should not be applied blindly to all components of the balance law that should stay positive, but that it is instead of interest to use MP only for the densities and to explicitly weigh in a consistent manner also the other numerical fluxes to be consistent with the contact discontinuities. We performed numerical experiments confirming the expected order of accuracy and the performance of the different numerical schemes when applied to various test cases. The results show that the flux-balanced version of the MPRK is much more stable than the classical MPRK schemes for large CFLs.

Future works include the investigation of a hybrid variant of the explicit and flux-balanced schemes including further Patankar-type schemes, and the application of the flux-balanced schemes to further hyperbolic systems such as the shallow water equations. We will also consider in future works the combination of the MP trick on the energy equation and the simultaneous preservation of contact discontinuities.
Finally, our future work will also involve studying algorithms that are capable of preserving entropy properties 
and positivity simultaneously. We will investigate the algorithm's capability of controlling the positivity of
internal energy or
pressure via the EoS, and additionally, we will pursue the approach of solving the Euler system in primitive 
variables to evolve the internal energy or pressure directly while keeping conservation.
\section*{Statements and Declarations}
\bmhead{Acknowledgments}
The author T.\ Izgin gratefully acknowledges the financial support by Fulbright Germany. 
C.-W. Shu acknowledges partial support from NSF grant DMS-2309249.
D. Torlo is member of the GNCS group of INdAM.
\bmhead{Conflict of interest}
 On behalf of all authors, the corresponding author states that there is no
 conflict of interest.
 \vspace{1cm}

\bibliography{cas-refs}

@article{WSYS2009,
title = {High-order well-balanced schemes and applications to non-equilibrium flow},
journal = {Journal of Computational Physics},
volume = {228},
number = {18},
pages = {6682-6702},
year = {2009},
issn = {0021-9991},
doi = {https://doi.org/10.1016/j.jcp.2009.05.028},
url = {https://www.sciencedirect.com/science/article/pii/S0021999109002836},
author = {Wei Wang and Chi-Wang Shu and H.C. Yee and Björn Sjögreen}
}

@misc{repoRPsolver,
	title        = {{exact Riemann solver for compressible Euler equations (code)}},
	author       = {Yue Wu},
	year         = 2024,
	month        = {September},
	howpublished = {\url{https://github.com/YueWu2002/Euler_exact_Riemann}}
}

@article{lax_systems_1960,
	title        = {Systems of conservation laws},
	author       = {Lax, Peter and Wendroff, Burton},
	year         = 1960,
	month        = may,
	journal      = {Comm. Pure Appl. Math.},
	volume       = 13,
	number       = 2,
	pages        = {217--237},
	doi          = {10.1002/cpa.3160130205},
	issn         = {00103640, 10970312},
	language     = {en}
}

@article{EG2023,
	title        = {Invariant-domain preserving high-order time stepping: {II}. {IMEX} schemes},
	author       = {Ern, Alexandre and Guermond, Jean-Luc},
	year         = 2023,
	journal      = {SIAM J. Sci. Comput.},
	volume       = 45,
	number       = 5,
	pages        = {A2511--A2538},
	doi          = {10.1137/22M1505025},
	issn         = {1064-8275,1095-7197},
	url          = {https://doi.org/10.1137/22M1505025},
	fjournal     = {SIAM Journal on Scientific Computing},
	mrclass      = {65M60 (35L65 65M12)},
	mrnumber     = 4644400,
	mrreviewer   = {Victor\ Michel-Dansac}
}

@article{EG2022,
	title        = {Invariant-domain-preserving high-order time stepping: {I}. explicit {R}unge-{K}utta schemes},
	author       = {Ern, Alexandre and Guermond, Jean-Luc},
	year         = 2022,
	journal      = {SIAM J. Sci. Comput.},
	volume       = 44,
	number       = 5,
	pages        = {A3366--A3392},
	doi          = {10.1137/21M145793X},
	issn         = {1064-8275,1095-7197},
	url          = {https://doi.org/10.1137/21M145793X},
	fjournal     = {SIAM Journal on Scientific Computing},
	mrclass      = {65M60 (35L65 65M12 65N30)},
	mrnumber     = 4496709
}

@article{WS2023,
	title        = {Geometric quasilinearization framework for analysis and design of bound-preserving schemes},
	author       = {Wu, Kailiang and Shu, Chi-Wang},
	year         = 2023,
	journal      = {SIAM Rev.},
	volume       = 65,
	number       = 4,
	pages        = {1031--1073},
	doi          = {10.1137/21M1458247},
	issn         = {0036-1445,1095-7200},
	url          = {https://doi.org/10.1137/21M1458247},
	fjournal     = {SIAM Review},
	mrclass      = {65M12 (35L65 65M06 65M08 65M60)},
	mrnumber     = 4664482,
	mrreviewer   = {Jingwei\ Li}
}

@article{Dzanic2024a,
	title        = {Continuously bounds-preserving discontinuous {G}alerkin methods for hyperbolic conservation laws},
	author       = {Dzanic, T.},
	year         = 2024,
	journal      = {J. Comput. Phys.},
	volume       = 508,
	pages        = {Paper No. 113010, 25},
	doi          = {10.1016/j.jcp.2024.113010},
	issn         = {0021-9991,1090-2716},
	url          = {https://doi.org/10.1016/j.jcp.2024.113010},
	fjournal     = {Journal of Computational Physics},
	mrclass      = {65M60 (35L40 35L65)},
	mrnumber     = 4735178,
	mrreviewer   = {Xiong\ Meng}
}

@article{Dzanic2024b,
	title        = {A note on higher-order and nonlinear limiting approaches for continuously bounds-preserving discontinuous {G}alerkin methods},
	author       = {Dzanic, T.},
	year         = 2024,
	journal      = {J. Comput. Phys.},
	volume       = 516,
	pages        = {Paper No. 113367, 7},
	doi          = {10.1016/j.jcp.2024.113367},
	issn         = {0021-9991,1090-2716},
	url          = {https://doi.org/10.1016/j.jcp.2024.113367},
	fjournal     = {Journal of Computational Physics},
	mrclass      = {65M60 (65M20)},
	mrnumber     = 4791328,
	mrreviewer   = {Yong\ Liu}
}

@article{GD2021,
	title        = {A posteriori subcell finite volume limiter for general {$P_NP_M$} schemes: applications from gasdynamics to relativistic magnetohydrodynamics},
	author       = {Gaburro, Elena and Dumbser, Michael},
	year         = 2021,
	journal      = {J. Sci. Comput.},
	volume       = 86,
	number       = 3,
	pages        = {Paper No. 37, 41},
	doi          = {10.1007/s10915-020-01405-8},
	issn         = {0885-7474,1573-7691},
	url          = {https://doi.org/10.1007/s10915-020-01405-8},
	fjournal     = {Journal of Scientific Computing},
	mrclass      = {65M08 (65M50 76N15 76W05 76Y05)},
	mrnumber     = 4205083,
	mrreviewer   = {Pedro\ Gonz\'alez-Casanova}
}

@article{RPG2022,
	title        = {Subcell limiting strategies for discontinuous {G}alerkin spectral element methods},
	author       = {Rueda-Ram\'irez, Andr\'es M. and Pazner, Will and Gassner, Gregor J.},
	year         = 2022,
	journal      = {Comput. \& Fluids},
	volume       = 247,
	pages        = {Paper No. 105627, 19},
	doi          = {10.1016/j.compfluid.2022.105627},
	issn         = {0045-7930,1879-0747},
	url          = {https://doi.org/10.1016/j.compfluid.2022.105627},
	fjournal     = {Computers \& Fluids. An International Journal},
	mrclass      = {65M60 (35L65)},
	mrnumber     = 4475945
}

@article{CGMPT2023,
	title        = {Robust second-order approximation of the compressible {E}uler equations with an arbitrary equation of state},
	author       = {Clayton, Bennett and Guermond, Jean-Luc and Maier, Matthias and Popov, Bojan and Tovar, Eric J.},
	year         = 2023,
	journal      = {J. Comput. Phys.},
	volume       = 478,
	pages        = {Paper No. 111926, 21},
	doi          = {10.1016/j.jcp.2023.111926},
	issn         = {0021-9991,1090-2716},
	url          = {https://doi.org/10.1016/j.jcp.2023.111926},
	fjournal     = {Journal of Computational Physics},
	mrclass      = {76N10 (65M60)},
	mrnumber     = 4543557,
	mrreviewer   = {Alberto\ Valli}
}

@article{PS1996,
	title        = {On positivity preserving finite volume schemes for {E}uler equations},
	author       = {Perthame, Benoit and Shu, Chi-Wang},
	year         = 1996,
	journal      = {Numer. Math.},
	volume       = 73,
	number       = 1,
	pages        = {119--130},
	doi          = {10.1007/s002110050187},
	issn         = {0029-599X,0945-3245},
	url          = {https://doi.org/10.1007/s002110050187},
	fjournal     = {Numerische Mathematik},
	mrclass      = {65M60 (76M25 76N15)},
	mrnumber     = 1379283,
	mrreviewer   = {Michael\ Fr\"ohner}
}

@techreport{ricchiuto2011habilitation,
	title        = {Contributions to the development of residual discretizations for hyperbolic conservation laws with application to shallow water flows},
	author       = {{Ricchiuto}, Mario},
	year         = 2011,
	month        = 12,
	address      = {Bordeaux, France},
	institution  = {Universit{\'e} Sciences et Technologies-Bordeaux I},
	type         = {Habilitation thesis}
}

@article{HR2020,
	title        = {Space-time residual distribution on moving meshes},
	author       = {Hubbard, M. E. and {Ricchiuto}, M. and S\'arm\'any, D.},
	year         = 2020,
	journal      = {Comput. Math. Appl.},
	volume       = 79,
	number       = 5,
	pages        = {1561--1589},
	doi          = {10.1016/j.camwa.2019.09.019},
	issn         = {0898-1221,1873-7668},
	url          = {https://doi.org/10.1016/j.camwa.2019.09.019},
	fjournal     = {Computers \& Mathematics with Applications. An International Journal},
	mrclass      = {65M08 (65M50 76B15)},
	mrnumber     = 4065801,
	mrreviewer   = {Jean-Pierre\ Croisille}
}

@article{IMPV2025,
	title        = {Modified Patankar Linear Multistep methods for production-destruction systems},
	author       = {Izzo, Giuseppe and Messina, Eleonora and Pezzella, Mario and Vecchio, Antonia},
	year         = 2025,
	journal      = {Journal of Scientific Computing},
	publisher    = {Springer},
	volume       = 102,
	number       = 3,
	pages        = 87
}

@article{SSPMPRK2,
	title        = {Positivity-preserving time discretizations for production-destruction equations with applications to non-equilibrium flows},
	author       = {Huang, J.  and Shu, C.-W.},
	year         = 2019,
	journal      = {J. Sci. Comput.},
	volume       = 78,
	number       = 3,
	pages        = {1811--1839},
	doi          = {},
	issn         = {0885-7474},
	url          = {https://doi.org/10.1007/s10915-018-0852-1},
	fjournal     = {Journal of Scientific Computing},
	mrclass      = {65M06 (65L06 65M20 76V05)},
	mrnumber     = 3934688,
	mrreviewer   = {Qifeng Zhang}
}

@article{sandu2001positive,
	title        = {Positive numerical integration methods for chemical kinetic systems},
	author       = {Sandu, A.},
	year         = 2001,
	journal      = {J. Comput. Phys.},
	volume       = 170,
	number       = 2,
	pages        = {589--602},
	doi          = {10.1006/jcph.2001.6750},
	issn         = {0021-9991},
	url          = {https://doi.org/10.1006/jcph.2001.6750},
	fjournal     = {Journal of Computational Physics},
	mrclass      = {80M20 (65L99 80A30)},
	mrnumber     = 1844904
}

@article{MCD2020,
	title        = {{{G}e{C}o}: {G}eometric {C}onservative nonstandard schemes for biochemical systems},
	author       = {Martiradonna, Angela and Colonna, Gianpiero and Diele, Fasma},
	year         = 2020,
	journal      = {Appl. Numer. Math.},
	volume       = 155,
	pages        = {38--57},
	doi          = {10.1016/j.apnum.2019.12.004},
	issn         = {0168-9274},
	url          = {https://doi.org/10.1016/j.apnum.2019.12.004},
	fjournal     = {Applied Numerical Mathematics. An IMACS Journal},
	mrclass      = {92C40 (65P10)},
	mrnumber     = 4087156
}

@article{NSARK,
	title        = {Order conditions for {Runge--Kutta}-like methods with solution-dependent coefficients},
	author       = {Thomas {{Izgin}} and David I. Ketcheson and Andreas Meister},
	year         = 2025,
	journal      = {Communications in Applied Mathematics and Computational Science},
	volume       = {20-1},
	pages        = {29--66},
	doi          = {10.2140/camcos.2025.20.29},
	url          = {https://doi.org/10.2140/camcos.2025.20.29}
}

@phdthesis{IzginThesis,
	title        = {A Unifying Theory for {Runge-Kutta}-like Time Integrators: Convergence and Stability},
	author       = {Izgin, Thomas},
	year         = {2024},
	school       = {University of Kassel},
	doi          = {10.17170/kobra-202402059522}
}

@article{KM18,
	title        = {On order conditions for modified {P}atankar-{R}unge-{K}utta schemes},
	author       = {Kopecz, S. and Meister, A.},
	year         = 2018,
	journal      = {Appl. Numer. Math.},
	publisher    = {Elsevier},
	volume       = 123,
	pages        = {159--179},
	issn         = {0168-9274},
	url          = {https://doi.org/10.1016/j.apnum.2017.09.004},
	fjournal     = {Applied Numerical Mathematics. An IMACS Journal},
	mrclass      = {65L06},
	mrnumber     = 3711996,
	mrreviewer   = {D. Shirokoff}
}

@article{IR2023,
	title        = {{Using Machine Learning to Design Time Step Size Controllers for Stable Time Integrators}},
      author={Thomas Izgin and Hendrik Ranocha},
      year={2025},
      eprint={2312.01796},
      archivePrefix={arXiv},
      primaryClass={math.NA},
      journal={https://arxiv.org/abs/2312.01796}, 
}

@article{IKM2022b,
	title        = {On the stability of unconditionally positive and linear invariants preserving time integration schemes},
	author       = {{{Izgin}}, Thomas and Kopecz, Stefan and Meister, Andreas},
	year         = 2022,
	journal      = {SIAM J. Numer. Anal.},
	volume       = 60,
	number       = 6,
	pages        = {3029--3051},
	doi          = {10.1137/22M1480318},
	issn         = {0036-1429,1095-7170},
	fjournal     = {SIAM Journal on Numerical Analysis},
	mrclass      = {65L05 (34D20 65L20)},
	mrnumber     = 4509959,
	mrreviewer   = {Lajos\ L\'{o}czi}
}

@article{AKM2020,
	title        = {A comprehensive theory on generalized {BBKS} schemes},
	author       = {\'{A}vila, Andr\'{e}s I. and Kopecz, Stefan and Meister, Andreas},
	year         = 2020,
	journal      = {Appl. Numer. Math.},
	volume       = 157,
	pages        = {19--37},
	doi          = {10.1016/j.apnum.2020.05.027},
	issn         = {0168-9274,1873-5460},
	fjournal     = {Applied Numerical Mathematics. An IMACS Journal},
	mrclass      = {65L06 (65L04 65L20)},
	mrnumber     = 4109346,
	mrreviewer   = {B\"{u}lent\ Karas\"{o}zen}
}

@article{KM2018b,
	title        = {{Unconditionally positive and conservative third order modified {P}atankar-{R}unge-{K}utta discretizations of production-destruction systems}},
	author       = {Kopecz, Stefan and Meister, Andreas},
	year         = 2018,
	journal      = {BIT},
	volume       = 58,
	number       = 3,
	pages        = {691--728},
	doi          = {10.1007/s10543-018-0705-1},
	issn         = {0006-3835,1572-9125},
	fjournal     = {BIT. Numerical Mathematics},
	mrclass      = {65L06 (65L05 65L20)},
	mrnumber     = 3855688,
	mrreviewer   = {Luigi\ Brugnano}
}

@article{BDM2003,
	title        = {A high-order conservative {P}atankar-type discretisation for stiff systems of production-destruction equations},
	author       = {Burchard, Hans and Deleersnijder, Eric and Meister, Andreas},
	year         = 2003,
	journal      = {Appl. Numer. Math.},
	volume       = 47,
	number       = 1,
	pages        = {1--30},
	doi          = {10.1016/S0168-9274(03)00101-6},
	issn         = {0168-9274,1873-5460},
	fjournal     = {Applied Numerical Mathematics. An IMACS Journal},
	mrclass      = {65L06 (65L05)},
	mrnumber     = 2003144,
	mrreviewer   = {Gabriela\ Schranz-Kirlinger}
}

@article{BIM2022,
	title        = {Positivity-preserving methods for ordinary differential equations},
	author       = {{Blanes, Sergio} and {Iserles, Arieh} and {Macnamara, Shev}},
	year         = 2022,
	journal      = {ESAIM: M2AN},
	volume       = 56,
	number       = 6,
	pages        = {1843--1870},
	doi          = {10.1051/m2an/2022042},
	url          = {https://doi.org/10.1051/m2an/2022042}
}

@article{Ciallella2022,
	title        = {An arbitrary high order and positivity preserving method for the shallow water equations},
	author       = {Ciallella, Mirco and Micalizzi, Lorenzo and \"{O}ffner, Philipp and Torlo, Davide},
	year         = 2022,
	journal      = {Comput. \& Fluids},
	volume       = 247,
	pages        = 105630,
	doi          = {10.1016/j.compfluid.2022.105630},
	issn         = {0045-7930,1879-0747},
	fjournal     = {Computers \& Fluids. An International Journal},
	mrclass      = {65M08 (76U60)},
	mrnumber     = 4480759
}

@article{Sandu2001,
	title        = {Positive numerical integration methods for chemical kinetic systems},
	author       = {Sandu, Adrian},
	year         = 2001,
	journal      = {J. Comput. Phys.},
	volume       = 170,
	number       = 2,
	pages        = {589--602},
	doi          = {10.1006/jcph.2001.6750},
	issn         = {0021-9991,1090-2716},
	fjournal     = {Journal of Computational Physics},
	mrclass      = {80M20 (65L99 80A30)},
	mrnumber     = 1844904
}

@misc{TORLO,
	title        = {A New Stability Approach for Positivity-Preserving {Patankar-type} Schemes},
	author       = {Torlo, D. and {\"O}ffner, P. and Ranocha, H.},
	year         = 2021,
	eprint       = {2108.07347},
	archiveprefix = {arXiv},
	primaryclass = {math.NA}
}

@article{IssuesMPRK,
	title        = {Issues with positivity-preserving {P}atankar-type schemes},
	author       = {Torlo, D. and {\"O}ffner, P. and Ranocha, H.},
	year         = 2022,
	journal      = {Appl. Numer. Math.},
	volume       = 182,
	pages        = {117--147},
	doi          = {10.1016/j.apnum.2022.07.014},
	issn         = {0168-9274},
	url          = {https://doi.org/10.1016/j.apnum.2022.07.014},
	fjournal     = {Applied Numerical Mathematics. An IMACS Journal},
	mrclass      = {65L06 (65L04)},
	mrnumber     = 4469089
}

@book{Patankar1980,
	title        = {Numerical heat transfer and fluid flow},
	author       = {Patankar, S. V.},
	year         = 1980,
	publisher    = {Hemisphere Pub. Corp. New York},
	address      = {Washington},
	series       = {{Series in computational methods in mechanics and thermal sciences}},
	isbn         = {0-07-048740-5},
	url          = {http://opac.inria.fr/record=b1085925}
}

@article{KM18Order3,
	title        = {Unconditionally positive and conservative third order modified {P}atankar-{R}unge-{K}utta discretizations of production-destruction systems},
	author       = {Kopecz, S. and Meister, A.},
	year         = 2018,
	journal      = {BIT},
	publisher    = {Springer},
	volume       = 58,
	number       = 3,
	pages        = {691--728},
	issn         = {0006-3835},
	url          = {https://doi.org/10.1007/s10543-018-0705-1},
	fjournal     = {BIT. Numerical Mathematics},
	mrclass      = {65L06 (65L05 65L20)},
	mrnumber     = 3855688,
	mrreviewer   = {Luigi Brugnano}
}

@article{MPDeC,
	title        = {Arbitrary high-order, conservative and positivity preserving {P}atankar-type deferred correction schemes},
	author       = {{\"O}ffner, P. and Torlo, D.},
	year         = 2020,
	journal      = {Appl. Numer. Math.},
	volume       = 153,
	pages        = {15--34},
	doi          = {},
	issn         = {0168-9274},
	url          = {https://doi.org/10.1016/j.apnum.2020.01.025},
	fjournal     = {Applied Numerical Mathematics. An IMACS Journal},
	mrclass      = {65L06},
	mrnumber     = 4064785,
	mrreviewer   = {Helmut Podhaisky}
}

@article{SSPMPRK3,
	title        = {A third-order unconditionally positivity-preserving scheme for production-destruction equations with applications to non-equilibrium flows},
	author       = {Huang, J.  and Zhao, W. and Shu, C.-W.},
	year         = 2019,
	journal      = {J. Sci. Comput.},
	volume       = 79,
	number       = 2,
	pages        = {1015--1056},
	doi          = {},
	issn         = {0885-7474},
	url          = {https://doi.org/10.1007/s10915-018-0881-9},
	fjournal     = {Journal of Scientific Computing},
	mrclass      = {65M06 (76Nxx 76V05)},
	mrnumber     = 3969000
}

@book{toro2013riemann,
  title={Riemann Solvers and Numerical Methods for Fluid Dynamics: A Practical Introduction},
  author={Toro, E.F.},
  isbn={9783540498346},
  lccn={2009921818},
  year={2009},
  publisher={Springer},
  address = {Berlin Heidelberg}
}

@article{einfeldt1991godunov,
  title={On {Godunov}-type methods near low densities},
  author={Einfeldt, Bernd and Munz, Claus-Dieter and Roe, Philip L and Sj{\"o}green, Bj{\"o}rn},
  journal={Journal of computational physics},
  volume={92},
  number={2},
  pages={273--295},
  year={1991},
  publisher={Elsevier}
}

@book{leveque2002finite,
 	address={Cambridge},
 	series={Cambridge Texts in Applied Mathematics},
 	title={Finite Volume Methods for Hyperbolic Problems},
 	publisher={Cambridge University Press},
 	author={LeVeque,Randall J.},
 	year={2002},
 	collection={Cambridge Texts in Applied Mathematics}
}

@incollection{kuzmin2005algebraic,
author="Kuzmin, Dmitri
and M{\"o}ller, Matthias",
editor="Kuzmin, Dmitri
and L{\"o}hner, Rainald
and Turek, Stefan",
title="Algebraic Flux Correction II. Compressible Euler Equations",
bookTitle="Flux-Corrected Transport: Principles, Algorithms, and Applications",
year="2005",
publisher="Springer Berlin Heidelberg",
address="Berlin, Heidelberg",
pages="207--250",
isbn="978-3-540-27206-9",
doi="10.1007/3-540-27206-2_7",
url="https://doi.org/10.1007/3-540-27206-2_7"
}

@article{gaburro2024discontinuous,
	title={Discontinuous {Galerkin} schemes for hyperbolic systems in non-conservative variables: quasi-conservative formulation with subcell finite volume corrections},
	author={Gaburro, Elena and Boscheri, Walter and Chiocchetti, Simone and Ricchiuto, Mario},
	journal={Computer Methods in Applied Mechanics and Engineering},
	volume={431},
	pages={117311},
	year={2024},
	publisher={Elsevier}
}

@article{abgrall2018high,
	title={A high-order nonconservative approach for hyperbolic equations in fluid dynamics},
	author={Abgrall, Remi and Bacigaluppi, P and Tokareva, S},
	journal={Computers \& Fluids},
	volume={169},
	pages={10--22},
	year={2018},
	publisher={Elsevier}
}

\end{document}